\newtheorem{theorem}{Theorem}
\newtheorem{lemma}[theorem]{Lemma}
\newtheorem{proposition}[theorem]{Proposition}
\theoremstyle{definition}
\newtheorem{remark}[theorem]{Remark}
\newtheorem{question}[theorem]{Question}
\newcommand{\N}{\mathbb{N}}  % set of natural numbers
\newcommand{\Z}{\mathbb{Z}}  % set of integer numbers
\newcommand{\R}{\mathbb{R}}  % set of real numbers
\newcommand{\C}{\mathbb{C}}  % set of complex numbers
\newcommand{\D}{\mathbb{D}}  % unit disc
\newcommand{\T}{\mathbb{T}}  % unit circle
\newcommand{\K}{\mathbb{K}}  % field
\newcommand{\eps}{\varepsilon} % abbreviation for epsilon
\newcommand{\ov} {\overline}   % abbreviation for overline
\newcommand{\cB}{\mathcal{B}}
\newcommand{\cC}{\mathcal{C}}
\newcommand{\Lip}{\operatorname{Lip}}
\begin{document}

% \title[short text for running head]{full title}

\title[Shadowing and Structural Stability in Linear Dynamical Systems]
      {Shadowing and Structural Stability\\ in Linear Dynamical Systems}

% Only \author and \address are required; other information is optional.
% Remove any unused author tags.
% \author[short version for running head]{name for top of paper}

\author{Nilson C. Bernardes Jr.}
\address{Departamento de Matem\'atica Aplicada, Instituto de Matem\'atica,
    Universidade Federal do Rio de Janeiro, Caixa Postal 68530,
    Rio de Janeiro, RJ, 21945-970, Brazil.}
\curraddr{}
\email{ncbernardesjr@gmail.com}
\thanks{The first author was partially supported by grant
        {\#}2017/22588-0, S\~ao Paulo Research Foundation (FAPESP)}

\author{Ali Messaoudi}
\address{Departamento de Matem\'atica, Universidade Estadual Paulista,
    Rua Crist\'ov\~ao Colombo, 2265, Jardim Nazareth, S\~ao Jos\'e do
    Rio Preto, SP, 15054-000, Brasil.}
\curraddr{}
\email{ali.messaoudi@unesp.br}
\thanks{The second author was partially supported by project
{\#}307776/2015-8 of CNPq and by projects {\#}2013/24541-0 and
{\#}2017/22588-0 of FAPESP}

\subjclass[2010]{Primary 47A16; Secondary 37C50, 37C20, 37B99.}

\keywords{Linear operators, shadowing, expansivity, hyperbolicity,
structural stability, weighted shifts.}

\date{} %\today

\dedicatory{}

\begin{abstract}
A well-known result in the area of dynamical systems asserts that any
invertible hyperbolic operator on any Banach space is structurally stable.
This result was originally obtained by P. Hartman in 1960 for operators
on finite-dimensional spaces. The general case was independently obtained
by J. Palis and C. Pugh around 1968.
We will exhibit examples of structurally stable operators that are not
hyperbolic, thereby showing that the converse of the above-mentioned
result is false in general.
We will also prove that an invertible operator on a Banach space is
hyperbolic if and only if it is expansive and has the shadowing property.
Moreover, we will show that if a structurally stable operator is expansive,
then it must be uniformly expansive.
Finally, we will characterize the weighted shifts on the spaces $c_0(\Z)$
and $\ell_p(\Z)$ ($1 \leq p < \infty$) that satisfy the shadowing property.
\end{abstract}

\maketitle

%%%%%%%%%%%%%%%%%%%%%%%%%%%%%%%%%%%%%%%%%%%%%%%%%%%%%%%%%%%%%%%%%%%%%%%%%%%%%

\section{Introduction}

Given a metric space $M$ with metric $d$ and a homeomorphism $h : M \to M$,
recall that a sequence $(x_n)_{n \in \Z}$ in $M$ is called a
{\em $\delta$-pseudotrajectory} of $h$, where $\delta > 0$, if
$$
d(h(x_n),x_{n+1}) \leq \delta \ \text{ for all } n \in \Z.
$$
The homeomorphism $h$ is said to have the {\em shadowing property} if
for every $\eps > 0$ there exists $\delta > 0$ such that every
$\delta$-pseudotrajectory $(x_n)_{n \in \Z}$ of $h$ is {\em $\eps$-shadowed}
by a real trajectory of $h$, that is, there exists $x \in M$ such that
$$
d(x_n,h^n(x)) < \eps \ \text{ for all } n \in \Z.
$$
This notion, which comes from the works of
Sina$\breve{\text{\i}}$~\cite{JSin72} and Bowen~\cite{RBow75},
plays a fundamental role in several branches of the area of dynamical
systems. We refer the reader to the books \cite{AKatBHas95,KPal00,SPil99}
for nice expositions of the shadowing property and its applications.

\smallskip
Another fundamental notion in the area of dynamical systems is that of
{\em hyperbolicity}. For (continuous linear) operators $T$ on complex
Banach spaces, this notion can be defined by requiring that the spectrum
$\sigma(T)$ of $T$ does not intersect the unit circle $\T$. In the case of
real Banach spaces, it is required that $\sigma(T_\C) \cap \T = \emptyset$,
where $T_\C$ denotes the complexification of $T$.
For invertible operators on Banach spaces, it was well-known that
hyperbolicity implies the shadowing property and that the converse holds
on finite-dimensional spaces \cite{JOmb94} and for normal operators on
Hilbert spaces \cite{MMaz00}. It remained open for a while whether or not
this converse was always true. This problem was finally solved by Bernardes
et al. \cite{BerCirDarMesPuj18}, where examples of nonhyperbolic
invertible operators with the shadowing property were obtained.
In view of this fact, it is natural to ask which additional condition
one has to add to shadowing in order to get hyperbolicity. The answer
is {\em expansivity}. Indeed, we will establish the following result
in Section~\ref{HES}:
{\it An invertible operator $T$ on a Banach space $X$ is hyperbolic
if and only if it is expansive and has the shadowing property}.

\smallskip
Yet another fundamental notion in the area of dynamical systems is that
of {\em structural stability}, which was introduced by Andronov and
Pontrjagin \cite{AAndLPon37}. Actually, there are many variations of this
notion. Here we will follow Pugh~\cite{CPug69}. Given a Banach space $X$,
we denote by $C_b(X)$ the Banach space of all bounded continuous maps
$\varphi : X \to X$ endowed with the supremum norm
$$
\|\varphi\|_\infty = \sup_{x \in X} \|\varphi(x)\|.
$$
Recall that a map $\varphi : X \to X$ is said to be {\em Lipschitz}
if the constant
$$
\Lip(\varphi) = \sup_{x \neq y}
\frac{\|\varphi(x)-\varphi(y)\|}{\|x-y\|}
$$
is finite. Recall also that two maps $\varphi : X \to X$ and $\psi : X \to X$
are {\em topologically conjugate} if there is a homeomorphism
$h : X \to X$ such that $h \circ \varphi = \psi \circ h$. An invertible
operator $T$ on $X$ is said to be {\em structurally stable} if there exists
$\eps > 0$ such that $T + \varphi$ is topologically conjugate to $T$
whenever $\varphi \in C_b(X)$ is a Lipschitz map with
$\|\varphi\|_\infty \leq \eps$ and $\Lip(\varphi) \leq \eps$.
Moreover, $T$ is said to be
{\em structurally stable relative to} $GL(X)$ (the group of all invertible
operators on $X$) if there exists $\eps > 0$ such that $S$ is topologically
conjugate to $T$ whenever $S \in GL(X)$ and $\|S - T\| \leq \eps$ (here
$\|\cdot\|$ denotes the operator norm). The following result is classical:
{\it Every invertible hyperbolic operator $T$ on a Banach space $X$ is:
\begin{itemize}
\item [\rm (a)] structurally stable;
\item [\rm (b)] structurally stable relative to $GL(X)$.
\end{itemize}}
\noindent Conclusion (a) is often called Hartman's theorem. The original
version was obtained by Hartman \cite{PHar60} in the finite-dimensional
setting. The general case was independently obtained by Palis \cite{JPal68}
and Pugh \cite{CPug69}, motivated by an argument in Moser \cite{JMos69}.

\smallskip
We will prove in Section~\ref{HES} that there is a class $\cC$ of
invertible operators on certain Banach spaces $X$ such that each operator
in $\cC$ is structurally stable, but it is not hyperbolic and it is not
structurally stable relative to $GL(X)$.
This shows that the notion of structural stability for operators
does not coincide with the notion of hyperbolicity and does not coincide
with structural stability relative to $GL(X)$. It seems to be still an open
problem whether or not hyperbolicity and structural stability relative to
$GL(X)$ coincide. Nevertheless, it is well-known that these three notions
coincide in the finite-dimensional setting (see \cite[Theorem~2.4]{JRob72}).

\smallskip
Surprisingly enough, the structurally stable operators in the class $\cC$
have nontrivial fixed points and satisfy the so-called {\em frequent
hypercyclicity criterion} \cite[Section~9.2]{KGroAPer11}, which is a very
strong notion of chaos in linear dynamics. It simultaneously implies
frequent hypercyclicity, mixing, Devaney chaos, dense distributional chaos
and dense mean Li-Yorke chaos \cite{BerBonMulPer13,BerBonPer,KGroAPer11}.

\smallskip
We will also prove the following result in Section~\ref{HES}:
{\it Suppose that an invertible operator $T$ on a complex Banach space $X$
is structurally stable or structurally stable relative to $GL(X)$.
Then, the following facts hold:
\begin{itemize}
\item [\rm (a)] If $T$ is expansive, then $T$ is uniformly expansive.
\item [\rm (b)] If $T$ is positively expansive, then $T$ is hyperbolic.
\end{itemize}}

\noindent
Moreover, we will show that if an invertible weighted shift on a space
$\ell_p(\Z)$ ($1 \leq p < \infty$) or $c_0(\Z)$ is expansive and
strongly structurally stable, then it must be hyperbolic.

\smallskip
Due to the importance of {\em weighted shifts} in the area of operator theory
and its applications, many researchers have extensively studied the dynamics
of this class of operators (see the books \cite{FBayEMat09,KGroAPer11} and
the papers \cite{BarMarPer12,FBayIRuz15,BerBonMulPer13,BerBonMulPer15,
BerCirDarMesPuj18,GCosMSam04,KGro00,HSal95}, for instance).
In Section~\ref{Shifts} we will contribute to this line of investigation by
establishing the following characterization of the weighted shifts that
satisfy the shadowing property:
{\it Let $X = \ell_p(\Z)$ $(1 \leq p < \infty)$ or $X = c_0(\Z)$,
let $w = (w_n)_{n \in \Z}$ be a bounded sequence of scalars with
$\inf_{n \in \Z} |w_n| > 0$, and consider the bilateral weighted backward
shift
$$
B_w : (x_n)_{n \in \Z} \in X \mapsto (w_{n+1}x_{n+1})_{n \in \Z} \in X.
$$
Then $B_w$ has the shadowing property if and only if one of the
following conditions holds:
\begin{itemize}
\item [\rm (A)] $\displaystyle \lim_{n \to \infty} \sup_{k \in \Z}
                |w_k w_{k+1} \cdot\ldots\cdot w_{k+n}|^\frac{1}{n} < 1$.
\item [\rm (B)] $\displaystyle \lim_{n \to \infty} \inf_{k \in \Z}
                |w_k w_{k+1} \cdot\ldots\cdot w_{k+n}|^\frac{1}{n} > 1$.
\item [\rm (C)] $\displaystyle \lim_{n \to \infty} \sup_{k \in \N}
                |w_{-k} w_{-k-1} \cdot \ldots \cdot w_{-k-n}|^\frac{1}{n} < 1$
                and
                $\displaystyle \lim_{n \to \infty} \inf_{k \in \N}
                |w_k w_{k+1} \cdot \ldots \cdot w_{k+n}|^\frac{1}{n} > 1$.
\end{itemize}}
\noindent We observe that (A) and (B) are exactly the cases in which the
operator $B_w$ is hyperbolic \cite[Remark~35]{BerCirDarMesPuj18}.
In these cases, $B_w$ does not exhibit chaotic behaviour, since hyperbolic
operators are not even Li-Yorke chaotic \cite[Theorem~C]{BerCirDarMesPuj18},
which is a very weak notion of chaos. On the other hand, we will see
later that condition (C) implies the frequent hypercyclicity criterion,
and so, in this case, $B_w$ exhibit several types of chaotic behaviours.

%%%%%%%%%%%%%%%%%%%%%%%%%%%%%%%%%%%%%%%%%%%%%%%%%%%%%%%%%%%%%%%%%%%%%%%%%%%%%

\section{Preliminaries}

The letter $\N$ denotes the set of all positive integers and
$\N_0 = \N \cup \{0\}$. Moreover, $\D$ and $\T$ denote the open unit disc
and the unit circle in the complex plane $\C$, respectively.

\smallskip
Given a Banach space $X$, we denote by $B_X$ and by $S_X$ the closed unit
ball and the unit sphere of $X$, respectively. By an {\em operator} on $X$
we mean a bounded linear map $T$ form $X$ into itself. $\cB(X)$ denotes
the space of all operators on $X$ with the operator norm. If $T \in \cB(X)$,
then $\sigma(T)$, $\sigma_p(T)$ and $\sigma_a(T)$ denote the spectrum,
the point spectrum and the approximate point spectrum of $T$.
Recall that $\sigma_p(T)$ is the set of all eigenvalues of $T$ and that
$\lambda \in \sigma_a(T)$ if and only if there is a sequence $(x_n)_{n\in\N}$
in $S_X$ such that $\|Tx_n - \lambda x_n\| \to 0$ as $n \to \infty$.
Recall also that $\sigma_p(T) \subset \sigma_a(T) \subset \sigma(T)$
and that $\sigma_a(T)$ contains the boundary of $\sigma(T)$, which is
a compact set. We denote by $r(T)$ the spectral radius of $T$.
Recall that the spectral radius formula asserts that
$r(T) = \lim_{n \to \infty} \|T^n\|^\frac{1}{n}$
(see \cite{Y80}, for instance).

\smallskip
Let $c_0(\Z)$ be the Banach space of all sequences $(x_n)_{n \in \Z}$
of scalars such that $\lim_{|n| \to \infty} x_n = 0$ endowed with the
supremum norm. Let $\ell_p(\Z)$ be the Banach space of all absolutely
$p$-summable sequences $(x_n)_{n \in \Z}$ of scalars endowed with the
$p$-norm, where $1 \leq p < \infty$. Moreover, let $(e_n)_{n \in \Z}$
denote the sequence of canonical unit vectors in $c_0(\Z)$ and in
$\ell_p(\Z)$.

\smallskip
Recall that a continuous map $f : M \to M$, where $M$ is a metric space,
is said to be {\em expansive} if there exists a constant $e > 0$ such that
for every pair $x,y$ of distinct points in $M$, there exists $n \in \Z$
with $d(f^n(x),f^n(y)) \geq e$. In the case $T$ is an invertible operator
on a Banach space $X$, this is equivalent to say that there exists a
constant $c > 1$ such that for each point $x \in S_X$, there exists
$n \in \Z$ with $\|T^nx\| \geq c$. Recall also that $T$ is said to be
{\em uniformly expansive} if there exist $c > 1$ and $m \in \N$ such that
$$
x \in S_X \ \ \Longrightarrow \ \ \|T^mx\| \geq c \ \text{ or } \
                                  \|T^{-m}x\| \geq c.
$$
Actually, in the notions of expansivity and uniform expansivity for
operators, we can choose any $c > 1$ we want.
It is known that $T$ is uniformly expansive if and only if
$\sigma_a(T) \cap \T = \emptyset$ \cite{MEisJHed70,JHed71}.
In particular, every invertible hyperbolic operator is uniformly expansive.

\smallskip
The following fact will be used a few times: {\it if $T$ is an invertible
operator on a Banach space $X$ and $\varphi \in C_b(X)$ is a Lipschitz map
with $\|\varphi\|_\infty$ and $\Lip(\varphi)$ sufficiently small, then
$T + \varphi$ is a homeomorphism} (see \cite[Lemma~1]{CPug69}, for instance).

%%%%%%%%%%%%%%%%%%%%%%%%%%%%%%%%%%%%%%%%%%%%%%%%%%%%%%%%%%%%%%%%%%%%%%%%%%%%%

\section{Hyperbolicity, shadowing and structural stability}\label{HES}

We begin this section by recalling some notions from topological dynamics.
Let $M$ be a metric space with metric $d$ and $h : M \to M$ a homeomorphism.
For each $x \in M$ and $\eps > 0$, we define the {\em $\eps$-stable set}, the
{\em $\eps$-unstable set}, the {\em stable set} and the {\em unstable set
of $h$ at $x$} by
\begin{align*}
W^s_\eps(x,h) &= \{y \in M : d(h^n(x),h^n(y)) \leq \eps
                            \text{ for all } n \geq 0\},\\
W^u_\eps(x,h) &= \{y \in M : d(h^{-n}(x),h^{-n}(y)) \leq \eps
                            \text{ for all } n \geq 0\},\\
W^s(x,h) &= \{y \in M : d(h^n(x),h^n(y)) \to 0 \text{ as } n \to \infty\},\\
W^u(x,h) &= \{y \in M : d(h^{-n}(x),h^{-n}(y)) \to 0 \text{ as } n
\to \infty\},
\end{align*}
respectively. The homeomorphism $h$ is said to have {\em canonical
coordinates} if for each $\eps > 0$ there exists $\delta > 0$ such
that $d(x,y) \leq \delta$ implies $W^s_\eps(x,h) \cap
W^u_\eps(y,h)\neq \emptyset$. Moreover, we say that $h$ has {\em
hyperbolic coordinates} if $h$ has canonical coordinates and there
exist constants $c \geq 1$, $0 < \beta < 1$ and $\gamma > 0$ such
that the following properties hold for each $x \in M$:
\begin{itemize}
\item $y \in W^s_\gamma(x,h) \ \Longrightarrow \
      d(h^n(x),h^n(y)) \leq c \beta^n d(x,y)$ for all $n \in \N$.
\item $y \in W^u_\gamma(x,h) \ \Longrightarrow \
      d(h^{-n}(x),h^{-n}(y)) \leq c \beta^n d(x,y)$ for all $n \in \N$.
\end{itemize}
The original notion of ``canonical coordinates'' came from Smale's
investigations on Axiom~A diffeomorphisms~\cite{SSma67}. Here we are
following the terminology of~\cite{NAok89}. We will also consider
the sets
\begin{align*}
W^{b,s}(x,h) &= \big\{y \in M : \big(d(h^n(x),h^n(y))\big)_{n \in
\N}
                \text{ is bounded}\big\},\\
W^{b,u}(x,h) &= \big\{y \in M : \big(d(h^{-n}(x),h^{-n}(y))\big)_{n
\in \N}
                \text{ is bounded}\big\}.
\end{align*}

In the case $T$ is an invertible operator on a Banach space $X$, we
have that
$$
W^s_\eps(x,T) = x + W^s_\eps(0,T), \ W^s(x,T) = x + W^s(0,T),
\ W^{b,s}(x,T) = x + W^{b,s}(0,T),
$$
and similarly with ``$u$'' instead of ``$s$''. Moreover,
$W^{b,s}(0,T)$ and $W^{b,u}(0,T)$ are vector subspaces of $X$
satisfying
$$
T(W^{b,s}(0,T)) = W^{b,s}(0,T) \ \ \text{ and } \ \ T(W^{b,u}(0,T))
= W^{b,u}(0,T).
$$

\begin{theorem}\label{HyperExpShad}
For any invertible operator $T$ on any Banach space $X$,
the following assertions are equivalent:
\begin{itemize}
\item [\rm   (i)] $T$ is hyperbolic;
\item [\rm  (ii)] $T$ has hyperbolic coordinates;
\item [\rm (iii)] $T$ is expansive and has the shadowing property.
\end{itemize}
\end{theorem}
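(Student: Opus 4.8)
The plan is to prove the cycle of implications (i) $\Rightarrow$ (ii) $\Rightarrow$ (iii) $\Rightarrow$ (i), with the last implication carrying the main new content. For (i) $\Rightarrow$ (ii), I would start from the spectral decomposition of a hyperbolic operator: since $\sigma(T) \cap \T = \emptyset$, the Riesz projections split $X = X_s \oplus X_u$ into closed $T$-invariant subspaces with $r(T|_{X_s}) < 1$ and $r((T|_{X_u})^{-1}) < 1$, and after passing to an adapted norm one gets constants $c \geq 1$, $0 < \beta < 1$ with $\|T^n|_{X_s}\| \leq c\beta^n$ and $\|T^{-n}|_{X_u}\| \leq c\beta^n$ for $n \in \N$. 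One first checks that $W^s_\gamma(0,T) \subseteq X_s$ (a vector with a bounded forward orbit cannot have a nonzero unstable component, since that component blows up) and symmetrically $W^u_\gamma(0,T) \subseteq X_u$; the two displayed hyperbolic estimates are then exactly the contraction/expansion bounds. Canonical coordinates follow by writing $x - y = u + v$ with $u \in X_s$, $v \in X_u$, and taking $z = x - u = y + v$, which lies in $W^s_\eps(x,T) \cap W^u_\eps(y,T)$ once $\|x-y\|$ is small, because the projections are bounded and the estimates keep $\|T^n u\|$ and $\|T^{-n}v\|$ below $\eps$.

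For (ii) $\Rightarrow$ (iii), expansivity is obtained by showing $W^{b,s}(0,T) \cap W^{b,u}(0,T) = \{0\}$. Given a nonzero vector with bounded two-sided orbit, I would rescale it so that it lies in $W^s_\gamma(0,T) \cap W^u_\gamma(0,T)$; applying the forward estimate to $T^{-n}x$ (which again lies in the local stable set by invariance) and the backward estimate to $x$ yields $\|x\| \leq c^2 \beta^{2n}\|x\|$ for all $n$, forcing $x = 0$. Since $T$ is expansive precisely when this intersection is trivial — indeed, a nonzero element of the intersection gives, after normalization, a unit vector with $\sup_{n \in \Z}\|T^n x\| < \infty$, contradicting expansivity for, say, $c = 2$, and conversely via the remark that any $c > 1$ may be chosen — expansivity follows. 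The shadowing property is the classical consequence of hyperbolic coordinates, proved by gluing local stable and unstable pieces through a contraction (fixed-point) argument controlled by the constants $c$, $\beta$; I would cite the standard topological-dynamics statement rather than reproduce it.

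The heart of the theorem is (iii) $\Rightarrow$ (i), which I would prove by a functional-analytic reduction. On the Banach space $\ell^\infty(\Z,X)$ consider the invertible operator $\cS$ given by $(\cS\xi)_n = T\xi_{n-1}$ together with $\cD = I - \cS$. A bounded sequence $\xi$ solving $\xi_{n+1} - T\xi_n = u_n$ encodes exactly the shadowing of the pseudotrajectory with defect $(u_n)$, so by the open mapping theorem $T$ has the shadowing property if and only if $\cD$ is surjective. On the other hand $\ker \cD = \{(T^n a)_{n \in \Z} : \sup_{n \in \Z}\|T^n a\| < \infty\}$, which is isomorphic to $W^{b,s}(0,T) \cap W^{b,u}(0,T)$, so by the expansivity characterization just used, $T$ is expansive if and only if $\cD$ is injective. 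Hence (iii) is equivalent to $\cD$ being bijective, that is invertible, that is $1 \notin \sigma(\cS)$. It remains to identify this with hyperbolicity: the conjugation $U_\theta \cS U_\theta^{-1} = e^{i\theta}\cS$ by the diagonal multiplier $(U_\theta\xi)_n = e^{in\theta}\xi_n$ shows $\sigma(\cS)$ is rotation-invariant, and together with $r(\cS) = r(T)$ and $r(\cS^{-1}) = r(T^{-1})$ this yields $\sigma(\cS) = \{z : |z| = |\lambda| \text{ for some } \lambda \in \sigma(T)\}$, whence $1 \in \sigma(\cS)$ if and only if $\sigma(T) \cap \T \neq \emptyset$.

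The main obstacle I anticipate is precisely this last spectral identification of $\sigma(\cS)$ as the circular symmetrization of $\sigma(T)$. The inclusion that produces $1 \in \sigma(\cS)$ is delicate when $\sigma(T) \cap \T$ consists of points interior to $\sigma(T)$ and hence not in $\sigma_a(T)$: one cannot simply build an approximate eigenvector from an approximate eigenvector of $T$. My plan is to argue by contradiction using the rotation symmetry — if the circle $|z| = 1$ avoided $\sigma(\cS)$, then by compactness an annular neighborhood would too, giving a Riesz spectral projection for $\cS$ that descends to a splitting of $X$ separating $\sigma(T)$ across the unit circle, contradicting $\sigma(T) \cap \T \neq \emptyset$ — and to lean on the operator-weighted-shift spectral machinery of \cite{BerCirDarMesPuj18} for the book-keeping. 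The reverse direction, that hyperbolicity makes $\cD$ invertible, is routine: the decomposition $T = T|_{X_s} \oplus T|_{X_u}$ splits $\cD$ into a piece inverted by a Neumann series in $\cS$ and a piece inverted by a Neumann series in $\cS^{-1}$.
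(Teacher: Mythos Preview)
Your proposal is correct but follows a genuinely different route from the paper, particularly for the main implication.

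The paper cycles $(\mathrm{i}) \Rightarrow (\mathrm{iii}) \Rightarrow (\mathrm{ii}) \Rightarrow (\mathrm{i})$, whereas you cycle $(\mathrm{i}) \Rightarrow (\mathrm{ii}) \Rightarrow (\mathrm{iii}) \Rightarrow (\mathrm{i})$. More importantly, the arguments behind the nontrivial steps are different. For the passage from expansivity plus shadowing back to hyperbolicity, the paper stays entirely inside $X$: it first uses shadowing on the pseudotrajectory $(\lambda^n x_0)_{n\in\Z}$, with $\lambda\in\sigma_a(T)\cap\T$ and $\|Tx_0-\lambda x_0\|$ small, to produce a nonzero vector with bounded two-sided orbit and thereby contradict expansivity unless $\sigma_a(T)\cap\T=\emptyset$; this upgrades expansivity to \emph{uniform} expansivity. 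Then Proposition~\ref{UnifExp} (a hands-on argument splitting $S_X$ into forward- and backward-expanding pieces) yields the exponential estimates on $W^{b,s}(0,T)$ and $W^{b,u}(0,T)$, and canonical coordinates give $X=W^{b,s}(0,T)\oplus W^{b,u}(0,T)$, from which hyperbolicity follows by the spectral radius formula.

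Your approach is the Mather-type functional-analytic reduction: encode shadowing and expansivity as surjectivity and injectivity of $I-\cS$ on $\ell^\infty(\Z,X)$, and then identify invertibility of $I-\cS$ with hyperbolicity of $T$. This is elegant and puts the three conditions on an equal spectral footing, but the price is exactly the obstacle you flag: the ``descent'' of the Riesz projection for $\cS$ to a hyperbolic splitting of $X$. Your sketch for this is sound---the shift $\tau$ commutes with $\cS$, hence with the Riesz projection $P$, so $P$ preserves constant sequences and restricts to a bounded projection $Q$ on $X$ commuting with $T$, with $r(T|_{QX})\le r(\cS|_{P\ell^\infty})<1$---but it is a genuine lemma that the paper avoids entirely by its more elementary $\sigma_a$ argument. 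Conversely, the paper's route is self-contained and yields Proposition~\ref{UnifExp} as a byproduct, whereas your route packages everything into one spectral statement at the cost of importing (or reproving) the dichotomy theorem for $\cS$. Your $(\mathrm{ii})\Rightarrow(\mathrm{iii})$ step, citing the classical ``hyperbolic coordinates imply shadowing'' lemma, is legitimate in this linear setting since the estimates are uniform, though the paper sidesteps that citation by taking $(\mathrm{i})\Rightarrow(\mathrm{iii})$ directly.
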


For the proof we will need the following result.

\begin{proposition}\label{UnifExp}
Let $T$ be an invertible operator on a Banach space $X$. If $T$ is
uniformly expansive, then there are constants $c \geq 1$ and $0 <
\beta < 1$ such that
\begin{equation}\label{wbs}
W^{b,s}(0,T) = W^s(0,T) = \{x \in X : \|T^nx\| \leq c \beta^n \|x\|
                                      \text{ for all } n \in \N\}
\end{equation}
and
\begin{equation}\label{wbu}
W^{b,u}(0,T) = W^u(0,T) = \{x \in X : \|T^{-n}x\| \leq c \beta^n
\|x\|
                                      \text{ for all } n \in \N\}.
\end{equation}
In particular, the subspaces $W^{b,s}(0,T)$ and $W^{b,u}(0,T)$ are
closed in $X$.
\end{proposition}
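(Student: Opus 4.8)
The plan is to reduce everything to a single dichotomy coming from uniform expansivity and then show that this dichotomy already forces exponential decay. By the characterization of uniform expansivity recalled in the Preliminaries, there exist $c > 1$ and $m \in \N$ such that, after rescaling by homogeneity, $\|T^m y\| \geq c\|y\|$ or $\|T^{-m}y\| \geq c\|y\|$ for every nonzero $y \in X$. Writing $S = T^m$ and $a_j = \|S^j x\|$ for a fixed nonzero $x$ (so that $a_j > 0$ since $S$ is invertible; the case $x = 0$ is trivial), applying this to $y = S^j x$ yields the following three-term dichotomy: for every $j \in \Z$, either $a_{j+1} \geq c\,a_j$ or $a_{j-1} \geq c\,a_j$.

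First I would prove the central claim: if the forward orbit $(a_n)_{n \geq 0}$ is bounded, then $a_{n+1} \leq c^{-1}a_n$ for all $n \geq 0$, hence $a_n \leq c^{-n}\|x\|$. The argument is by contradiction. If some step fails to contract, i.e.\ $a_{n_0+1} > c^{-1}a_{n_0}$ for some $n_0 \geq 0$, then $a_{n_0} < c\,a_{n_0+1}$, so the dichotomy applied at $j = n_0+1$ rules out its second alternative and forces $a_{n_0+2} \geq c\,a_{n_0+1}$. An easy induction then propagates this: whenever $a_{k+1} \geq c\,a_k$ with $k \geq n_0+1$, one has $a_k \leq c^{-1}a_{k+1} < c\,a_{k+1}$, so the dichotomy at $j = k+1$ gives $a_{k+2} \geq c\,a_{k+1}$. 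Thus $a_{n_0+1+j} \geq c^{\,j}a_{n_0+1} \to \infty$, contradicting boundedness. This is the heart of the proof and the only step that genuinely uses uniform expansivity rather than mere expansivity, which would give a pointwise statement with no uniform rate.

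Next I would transfer this from $S = T^m$ back to $T$. Since $\|T^r\|$ is bounded over the finitely many $r \in \{0,\dots,m-1\}$, a routine sandwiching shows $W^{b,s}(0,T) = W^{b,s}(0,S)$ and $W^s(0,T) = W^s(0,S)$, and writing $n = mk+r$ converts the estimate $a_k \leq c^{-k}\|x\|$ into $\|T^n x\| \leq C\beta^n\|x\|$ with $\beta = c^{-1/m} \in (0,1)$ and $C = \max_{0 \leq r < m} \|T^r\|\,c^{r/m} \geq 1$. Combining the central claim with the trivial inclusions $\{x : \|T^n x\| \leq C\beta^n\|x\| \ \forall\, n\} \subseteq W^s(0,T) \subseteq W^{b,s}(0,T)$ closes the circle of inclusions and yields the stable identity \eqref{wbs}. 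The unstable identity \eqref{wbu} follows by applying the entire argument to $T^{-1}$, which is uniformly expansive with the same $c$ and $m$ by the symmetry of the dichotomy; taking the larger of the two resulting constants gives one pair $(c,\beta)$ serving both equations. Finally, closedness is immediate, since each set $\{x : \|T^n x\| \leq c\beta^n\|x\|\}$ is closed by continuity of $x \mapsto \|T^n x\| - c\beta^n\|x\|$, and the displayed subspaces are intersections of such sets over $n \in \N$.

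I expect the main obstacle to be the bookkeeping in the central claim — invoking the dichotomy at the correct index and checking that the induction propagates forward without gaps — rather than any deep difficulty; everything downstream (the passage from $T^m$ to $T$, the unstable case, and closedness) is mechanical.
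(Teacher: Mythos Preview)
Your proposal is correct and follows essentially the same approach as the paper: both arguments extract from uniform expansivity the dichotomy that for every $y$ either $\|T^m y\|\geq c\|y\|$ or $\|T^{-m}y\|\geq c\|y\|$, show that boundedness of the forward orbit forces the second alternative at every step (via a propagation/induction that would otherwise produce geometric growth), and then pass from $T^m$ to $T$ by bounding the finitely many $\|T^r\|$ with $0\le r<m$. The only cosmetic difference is that the paper normalizes to the unit sphere and phrases the dichotomy as a partition $S_X=A\cup B$ with the self-mapping properties $x\in A\Rightarrow T^mx/\|T^mx\|\in A$ (and similarly for $B$), whereas you work directly with the scalar sequence $a_j=\|T^{mj}x\|$; the content is the same.
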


\begin{proof}
Since $T$ is uniformly expansive, there exists $m \in \N$ such that
$S_X = A \cup B$, where
$$
A = \{x \in S_X : \|T^mx\| \geq 2\} \ \ \text{ and } \ \ B = \{x \in
S_X : \|T^{-m}x\| \geq 2\}.
$$
Note that
\begin{equation}\label{uea}
x \in A \ \Longrightarrow \ \frac{T^mx}{\|T^mx\|} \in A,
\end{equation}
for otherwise we would get $1 = \|x\| \geq 2 \|T^mx\| \geq 4$,
a contradiction. Similarly,
\begin{equation}\label{ueb}
x \in B \ \Longrightarrow \ \frac{T^{-m}x}{\|T^{-m}x\|} \in B.
\end{equation}
Let
$$
\beta = \Big(\frac{1}{2}\Big)^\frac{1}{m} \ \ \text{ and } \ \ c =
\max\Big\{\frac{\|T^j\|}{\beta^{|j|}} : -m < j < m\Big\}.
$$
Let $W$ denote the set in the right-hand side of (\ref{wbs}). It is
clear that $W \subset W^s(0,T) \subset W^{b,s}(0,T)$. Take $x \in
W^{b,s}(0,T) \backslash \{0\}$. Given $n \in \N$, we define two sequences
$(y_k)_{k \in \N}$ and $(z_k)_{k \in \N}$ as follows: $y_1 = z_1 =
\frac{T^nx}{\|T^nx\|}$, $y_k = \frac{T^my_{k-1}}{\|T^my_{k-1}\|}$
and $z_k = \frac{T^{-m}z_{k-1}}{\|T^{-m}z_{k-1}\|}$ ($k \geq 2$).
Note that, for every $k \geq 2$,
$$
y_k = \frac{T^{(k-1)m+n}x}
           {\|T^my_1\| \cdot\ldots\cdot \|T^my_{k-1}\|\|T^nx\|}
\ \text{ and } \ z_k = \frac{T^{-(k-1)m+n}x}
           {\|T^{-m}z_1\| \cdot\ldots\cdot \|T^{-m}z_{k-1}\|\|T^nx\|}\cdot
$$
If $y_1 \in A$, then (\ref{uea}) implies that $y_k \in A$ for all $k
\in \N$, and so
$$
\|T^{km+n}x\| = \|T^my_1\|\cdot\ldots\cdot\|T^my_k\| \cdot \|T^nx\|
  \geq 2^k \|T^nx\| \ \ \ \ (k \in \N).
$$
This contradicts the fact that $(T^jx)_{j \in \N}$ is bounded. Thus,
we must have $z_1 = y_1 \in B$. By (\ref{ueb}), $z_k \in B$ for all
$k \in \N$, and so
$$
\|T^{-km+n}x\| = \|T^{-m}z_1\|\cdot\ldots\cdot\|T^{-m}z_k\| \cdot
\|T^nx\|
  \geq 2^k \|T^nx\| \ \ \ \ (k \in \N).
$$
Since $n \in \N$ is arbitrary, we may choose $n = km$ and obtain
$$
\|T^{km}x\| \leq \frac{1}{2^k} \|x\| = \beta^{km} \|x\| \ \ \ \ (k
\in \N).
$$
Now, an easy computation shows that $x \in W$. This proves
(\ref{wbs}). By applying (\ref{wbs}) with $T^{-1}$ instead of $T$ we
obtain (\ref{wbu}).
\end{proof}

\begin{remark}
We cannot replace the hypothesis of uniform expansivity by expansivity in
Proposition~\ref{UnifExp}. For instance, let $X = \ell_p(\Z)$
($1 \leq p < \infty$) or $X = c_0(\Z)$, and let
$$
B_w : (x_n)_{n \in \Z} \in X \mapsto (w_{n+1}x_{n+1})_{n \in \Z} \in
X
$$
be the bilateral weighted backward shift on $X$ whose weight sequence
$w = (w_n)_{n \in \Z}$ is given by $w_n = 2$ if $n < 0$, $w_n = 1$ if
$n \geq 0$. Then $B_w$ is positively expansive (for each $x \in S_X$,
there exists $n \in \N$ such that $\|B_w^nx\| \geq 2$) and, in particular,
it is expansive. However, $W^{b,u}(0,B_w) = X$ and $W^u(0,B_w) = \{0\}$.
\end{remark}

\begin{remark}
It is not difficult to prove that if $T \in \cB(X)$ is uniformly expansive,
then for all $ \eps >0$ and $x \in X$, we have
$$
W^s(x,T) = \bigcup_{n=0}^{\infty} T^{-n}\big(W_{\eps}^s(T^n x,T)\big)
\ \text{ and } \
W^u(x,T) = \bigcup_{n=0}^{\infty} T^{n}\big(W_{\eps}^u(T^{-n} x,T)\big).
$$
\end{remark}

\smallskip
\noindent {\it Proof of Theorem \ref{HyperExpShad}.}
(i) $\Rightarrow$ (iii): It is well-known that every invertible
hyperbolic operator on a Banach space is expansive (even uniformly
expansive) \cite{MEisJHed70} and has the shadowing property
\cite{JOmb94}.

\smallskip
\noindent (iii) $\Rightarrow$ (ii): Let us show that $T$ has
canonical coordinates. Fix $\eps > 0$ arbitrary. Since $T$ has the
shadowing property, there exists $\delta \in (0,\frac{\eps}{2})$
such that every $\delta$-pseudotra\-jectory of $T$ is
$\frac{\eps}{2}$-shadowed by a real trajectory of $T$. Given $x,y
\in X$ with $\|x-y\| \leq \delta$, we define
$$
z_n = \left\{\begin{array}{cl}
             T^nx & \text{if } n \geq 0\\
             T^ny & \text{if } n < 0.
             \end{array}
             \right.
$$
By shadowing, there exists a point $z \in X$ such that
$$
\|z_n - T^nz\| < \eps/2 \ \ \text{ for all } n \in \Z.
$$
By the definition of $(z_n)$, we see that $z \in W^s_\eps(x,T) \cap
W^u_\eps(y,T)$. Now, if $T$ is not uniformly expansive, then there exists
$\lambda \in \sigma_a(T) \cap \T$ \cite{JHed71} and we can get a point
$x_0 \in S_X$ such that $\|Tx_0 - \lambda x_0\| \leq \delta$.
Since $(\lambda^n x_0)_{n \in \Z}$ is a $\delta$-pseudotrajectory of $T$,
there exists $y \in X$ such that $\|\lambda^n x_0 - T^n y\| < \eps$ for all
$n \in \Z$. This implies that $1 - \eps < \|T^n y\| < 1 + \eps$ for all
$n \in \Z$, which contradicts the hypothesis that $T$ is expansive.
Thus, $T$ is uniformly expansive and it follows from
Proposition~\ref{UnifExp} that $T$ has hyperbolic coordinates.

\smallskip
\noindent (ii) $\Rightarrow$ (i): Let $E = W^{b,s}(0,T)$ and $F =
W^{b,u}(0,T)$. We know that $E$ and $F$ are vector subspaces of $X$
satisfying $T(E) = E$ and $T(F) = F$. By hypothesis, there exist
constants $c \geq 1$, $0 < \beta < 1$ and $\gamma > 0$ such that:
\begin{itemize}
\item $x \in W^s_\gamma(0,T) \ \Longrightarrow \ \|T^nx\| \leq c \beta^n \|x\|$
      for all $n \in \N$.
\item $x \in W^u_\gamma(0,T) \ \Longrightarrow \ \|T^{-n}x\| \leq c \beta^n \|x\|$
      for all $n \in \N$.
\end{itemize}
This implies that
\begin{equation}\label{eqm}
E = \{x \in X : \|T^nx\| \leq c \beta^n \|x\| \text{ for all } n \in
\N\}
\end{equation}
and
\begin{equation}\label{eqn}
F = \{x \in X : \|T^{-n}x\| \leq c \beta^n \|x\| \text{ for all } n
\in \N\}.
\end{equation}
In particular, $E$ and $F$ are closed in $X$. Moreover, if $x \in E
\cap F$ then
$$
\|x\| = \|T^nT^{-n}x\| \leq c \beta^n \|T^{-n}x\| \leq c^2
\beta^{2n} \|x\|,
$$
for all $n \in \N$, and so $x = 0$. This proves that $E \cap F =
\{0\}$. Now, let us prove that $E + F = X$. Since $T$ has canonical
coordinates, there exists $\delta > 0$ such that $W^s_1(x,T) \cap
W^u_1(y,T) \neq \emptyset$ whenever $\|x-y\| \leq \delta$. It is
enough to show that the ball $B_X(0;\delta)$ is contained in $E+F$.
So, take a point $x$ in this ball. Then, there must exist a point $z
\in W^s_1(x,T) \cap W^u_1(0,T)$. This implies that $x-z \in E$ and
$z \in F$. And, of course, $x = (x-z)+z$. Finally, it follows from
(\ref{eqm}) that $\|(T|_E)^n\| \leq c \beta^n$ for all $n \in \N$.
Hence, the spectral radius formula gives $r(T|_E) \leq \beta < 1$.
Analogously, by (\ref{eqn}), $r(T^{-1}|_F) < 1$.
This completes the proof that $T$ is hyperbolic.
\hfill $\Box$

\medskip
For not necessarily invertible maps, we can define the notion of
{\em positive shadowing} simply by replacing the set $\Z$ by the set
$\N_0$ in the definition of shadowing.

\begin{proposition}
For any operator $T$ on any Banach space $X$, the following assertions
are equivalent:
\begin{itemize}
\item [\rm  (i)] $T$ is hyperbolic with spectrum contained in
                 $\C \backslash \ov{\D}$;
\item [\rm (ii)] $T$ is positively expansive and has the positive shadowing
                 property.
\end{itemize}
\end{proposition}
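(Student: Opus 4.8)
The plan is to prove the two implications separately, paralleling the proof of Theorem~\ref{HyperExpShad} but working throughout with the one-sided index set $\N_0$.

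For (i)$\Rightarrow$(ii), I would first observe that $\sigma(T)\subset\C\setminus\ov{\D}$ forces $0\notin\sigma(T)$, so $T$ is invertible, and that $\sigma(T^{-1})=\{1/\lambda:\lambda\in\sigma(T)\}\subset\D$, whence $r(T^{-1})<1$ and there are constants $c\ge 1$, $0<\beta<1$ with $\|T^{-n}\|\le c\beta^n$ for all $n\in\N_0$. Positive (indeed uniform) expansivity is then immediate: for $x\in S_X$ one has $1=\|T^{-n}T^nx\|\le c\beta^n\|T^nx\|$, so $\|T^nx\|\ge (c\beta^n)^{-1}\to\infty$ uniformly in $x$. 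For positive shadowing, given a $\delta$-pseudotrajectory $(x_n)_{n\ge 0}$ I set $e_n=x_{n+1}-Tx_n$ (so $\|e_n\|\le\delta$) and take as shadowing point the absolutely convergent series $y=x_0+\sum_{j\ge 0}T^{-(j+1)}e_j$. Writing $u_n:=x_n-T^ny$ and using $u_{n+1}=Tu_n+e_n$ one gets $u_n=-\sum_{k\ge 0}T^{-(k+1)}e_{n+k}$, hence $\|x_n-T^ny\|\le\frac{c\beta}{1-\beta}\,\delta$ for every $n$; choosing $\delta$ small yields $\eps$-shadowing. This is the one-sided form of the classical argument that hyperbolic operators shadow.

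For (ii)$\Rightarrow$(i), I would proceed in three steps. First, positive expansivity gives injectivity of $T$ (applying it to $(x-y)/\|x-y\|$, a relation $Tx=Ty$ with $x\ne y$ would force $\|x-y\|\ge c\|x-y\|$ with $c>1$, a contradiction). Second, I would show $\sigma_a(T)\cap\ov{\D}=\emptyset$ by the approximate-eigenvector trick of Theorem~\ref{HyperExpShad}: if $\lambda\in\sigma_a(T)$ with $|\lambda|\le 1$, pick $x_0\in S_X$ with $\|Tx_0-\lambda x_0\|\le\delta$; since $|\lambda|\le 1$, the sequence $(\lambda^nx_0)_{n\ge 0}$ is a $\delta$-pseudotrajectory, so positive shadowing produces $y$ with $\|\lambda^nx_0-T^ny\|<\eps$ for all $n\ge 0$. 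Then $\|T^ny\|\le|\lambda|^n+\eps\le 1+\eps$ while $\|y\|\ge 1-\eps$, and feeding $y/\|y\|$ into positive expansivity gives $\|T^ny\|\ge c(1-\eps)$ for some $n$, contradicting $c>1$ once $\eps<\frac{c-1}{c+1}$. Hence $\min\{|\mu|:\mu\in\sigma_a(T)\}>1$, and the standard minimum-modulus identity $\lim_n\big(\inf_{\|x\|=1}\|T^nx\|\big)^{1/n}=\min\{|\mu|:\mu\in\sigma_a(T)\}$ produces constants $a\in(0,1]$, $\rho>1$ with $\|T^nx\|\ge a\rho^n\|x\|$ for all $x$ and all $n\in\N_0$; in particular $\|T^nx\|\to\infty$ for every $x\ne 0$.

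The main obstacle is the third step, surjectivity of $T$: here $\sigma_a(T)\cap\ov{\D}=\emptyset$ together with positive expansivity is genuinely insufficient, as the scaled unilateral shift $2S$ on $\ell_2(\N)$ shows (one has $\sigma_a(2S)=2\T$ but $\sigma(2S)=2\ov{\D}$, and $2S$ is positively expansive yet lacks positive shadowing), so positive shadowing must be used to eliminate residual spectrum inside $\ov{\D}$. I would extract surjectivity cleanly as follows: fix $\xi\in X$ with $\|\xi\|\le\delta$ and consider $x_0=0$, $x_n=T^{n-1}\xi$ for $n\ge 1$, which is a genuine $\delta$-pseudotrajectory since its only jump is $\|Tx_0-x_1\|=\|\xi\|\le\delta$. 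Shadowing gives $y$ with $\|T^{n-1}(\xi-Ty)\|=\|x_n-T^ny\|<\eps$ for all $n\ge 1$; combined with the uniform expansion $\|T^{n-1}(\xi-Ty)\|\ge a\rho^{n-1}\|\xi-Ty\|$, letting $n\to\infty$ forces $\xi=Ty\in T(X)$. Thus $T(X)$ contains the ball $\delta B_X$ and therefore equals $X$, so $T$ is invertible. Finally, the expansion bound yields $\|T^{-n}\|\le a^{-1}\rho^{-n}$, so $r(T^{-1})\le 1/\rho<1$ and $\sigma(T)\subset\{|z|\ge\rho\}\subset\C\setminus\ov{\D}$, which completes the proof.
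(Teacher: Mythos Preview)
Your proof is correct and shares the paper's two key devices: the approximate-eigenvector pseudotrajectory $(\lambda^n x_0)$ to exclude $\sigma_a(T)\cap\ov{\D}$, and the one-jump pseudotrajectory $x_0=0$, $x_n=T^{n-1}\xi$ to force surjectivity via shadowing. The overall architecture differs in two respects worth noting.

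First, the order is reversed: the paper establishes invertibility \emph{before} touching $\sigma_a(T)$, using positive expansivity directly in the surjectivity step. From $\|T^{n-1}(Ty-z)\|<1$ for all $n$, positive expansivity alone (which forces $\sup_n\|T^nv\|=\infty$ for every nonzero $v$, by iterating the defining inequality) yields $Ty=z$. You instead first clear $\sigma_a(T)$ out of $\ov{\D}$, then invoke the minimum-modulus (lower spectral radius) formula $\lim_n\big(\inf_{\|x\|=1}\|T^nx\|\big)^{1/n}=\min\{|\mu|:\mu\in\sigma_a(T)\}$ to obtain uniform exponential expansion, and use that in the surjectivity step. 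This works, but the extra spectral ingredient is avoidable; the paper's route is more elementary.

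Second, the endgame differs: you conclude $\sigma(T)\subset\C\setminus\ov{\D}$ via $r(T^{-1})\le1/\rho<1$, while the paper argues topologically from $0\notin\sigma(T)$ and $\partial\sigma(T)\subset\sigma_a(T)\subset\C\setminus\ov{\D}$. Both are standard.

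One small wording issue: your injectivity sentence ``would force $\|x-y\|\ge c\|x-y\|$'' is garbled. The correct point is that $Tx=Ty$ with $x\ne y$ gives $T^nv=0$ for all $n\ge1$ (where $v=(x-y)/\|x-y\|$), contradicting the existence of $n\in\N$ with $\|T^nv\|\ge c$.
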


\begin{proof}
(i) $\Rightarrow$ (ii): Clear.

\smallskip
\noindent (ii) $\Rightarrow$ (i): Suppose that $T$ is not invertible.
Since $T$ is positively expansive, then $T$ is injective and not surjective.
Let $\delta > 0$ be associated to $\eps = 1$ in the definition of positive
shadowing. Fix $z \in X \backslash T(X)$ with $\|z\| = \delta$.
Consider the $\delta$-pseudotrajectory $(y_n)_{n \in \N_0}$ of $T$ defined by
$$
y_0 = 0, \ \ y_1 = z \ \ \text{ and } \ \ y_n = Ty_{n-1} = T^{n-1} z \
\text{ for } n \geq 2.
$$
By positive shadowing, there exists $y \in X$ such that
$$
\|T^{n-1}(Ty - z)\| < 1 \ \ \text{ for all } n \in \N.
$$
Since $T$ is positively expansive, we must have $Ty = z$.
This is a contradiction, since $z$ does not belong to $T(X)$.
Thus, the operator $T$ is invertible.

Now, we claim that
\begin{equation}\label{ab}
\sigma_{a}(T) \subset \C \backslash \ov{\D}.
\end{equation}
Indeed, suppose that this is false and take a
$\lambda \in \sigma_a(T) \cap \ov{\D}$. Let $x_0 \in S_X$ be such that
$\|Tx_0 - \lambda x_0\| \leq \delta$. Since $(\lambda^n x_0)_{n \in \N_0}$
is a $\delta$-pseudotrajectory of $T$, there exists $x \in X$ such that
$\|\lambda^n x_0 - T^n x\| < 1$ for all $n \in \N_0$. This implies that
$x \neq 0$ and $\|T^nx\| < 2$ for all $n \in \N$, contradicting the
hypothesis that $T$ is positively expansive. Thus, (\ref{ab}) holds.

Finally, since $0 \not\in \sigma(T)$ and $\sigma_a(T)$ contains the
boundary of $\sigma(T)$, (\ref{ab}) implies that
$\sigma(T) \subset \C \backslash \ov{\D}$.
\end{proof}   

\begin{theorem}\label{HyperSS}
Suppose that an invertible operator $T$ on a complex Banach space $X$
is structurally stable or structurally stable relative to $GL(X)$.
Then, the following facts hold:
\begin{itemize}
\item [\rm (a)] If $T$ is expansive, then $T$ is uniformly expansive.
\item [\rm (b)] If $T$ is positively expansive, then $T$ is hyperbolic.
\end{itemize}
\end{theorem}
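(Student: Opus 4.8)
The plan is to prove both parts by contraposition: use structural stability to upgrade a piece of spectrum lying on (or just inside) $\T$ into a \emph{genuine} invariant object for a small perturbation $S$ of $T$, and then transport that object through the conjugacy to contradict (positive) expansivity. Two elementary facts will be used repeatedly. First, since $T$ is expansive, $1 \notin \sigma_p(T)$ (an eigenvector $v$ with $Tv=v$ would give $\|T^n(v/\|v\|)\|=1<c$ for all $n$), so $0$ is the \emph{only} fixed point of $T$. Second, all my perturbations will be arranged to vanish at $0$, so that $S(0)=0$; then for any homeomorphism $h$ with $h\circ S = T\circ h$ the point $h(0)$ is fixed by $T$, forcing $h(0)=0$.

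For part (a), suppose $T$ is not uniformly expansive, so $\sigma_a(T)\cap\T\neq\emptyset$; fix $\lambda$ there and a unit vector $x_0$ with $\|Tx_0-\lambda x_0\|\leq\delta$. Pick $\mu\in\T$ that is \emph{not} a root of unity with $|\mu-\lambda|$ small, and make the complex circle $C=\{e^{i\theta}x_0:\theta\in\R\}$ genuinely invariant with $S|_C$ equal to the rotation by $\arg\mu$. In the setting relative to $GL(X)$ I take the rank-one operator $S\colon x\mapsto Tx + f(x)\,(\mu x_0 - Tx_0)$, where $f\in X^*$, $f(x_0)=1$; then $S(e^{i\theta}x_0)=\mu e^{i\theta}x_0$ and $\|S-T\|\leq|\mu-\lambda|+\delta$, with $S$ invertible for $\delta$ small. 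In the Lipschitz setting I define $\varphi$ on a tube of \emph{fixed} radius about $C$, equal to $e^{i\theta}(\mu x_0-Tx_0)$ on $C$ and cut off transversally; because the tube radius is a fixed constant while the bump height is $O(\delta+|\mu-\lambda|)$, both $\|\varphi\|_\infty$ and $\Lip(\varphi)$ are small. Either way $S$ acquires a genuine invariant circle $C$ carrying an irrational rotation. Now $\Gamma:=h(C)$ is a compact $T$-invariant set on which $T$ is minimal; as an irrational rotation is fixed-point-free, $0\notin\Gamma$, so $m:=\min_\Gamma\|\cdot\|>0$ and $M:=\max_\Gamma\|\cdot\|<\infty$. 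By minimality $\sup_n\|T^nx\|=M$ for every $x\in\Gamma$, so expansivity applied to $x/\|x\|$ gives $M/\|x\|\geq c$ for all $x\in\Gamma$; evaluating at a point of maximal norm yields $c\leq1$, a contradiction.

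For part (b), positive expansivity implies expansivity, so part (a) already gives $\sigma_a(T)\cap\T=\emptyset$. A one-line computation with Proposition~\ref{UnifExp} then shows $W^s(0,T)=\{0\}$: a nonzero stable $x$ has $\|T^nx\|\leq c\beta^n\|x\|$, whence $\sup_{n\geq0}\|T^n(x/\|x\|)\|\leq\max(1,c\beta)<c$, against positive expansivity. It remains to rule out residual spectrum on $\T$. If $T$ were not hyperbolic, then since $\partial\sigma(T)\subseteq\sigma_a(T)$ avoids $\T$, the connected set $\T$ lies in $\operatorname{int}\sigma(T)$; as $0\notin\sigma(T)$, the component of $\C\setminus\sigma(T)$ containing $0$ lies in $\D$, and its boundary furnishes $\nu\in\sigma_a(T)$ with $|\nu|<1$. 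I then perturb so that a nearby unit vector $x_0$ becomes a genuine eigenvector of $S$ with eigenvalue $\nu'$, $|\nu'|<1$ (a rank-one perturbation in the $GL(X)$ case; in the Lipschitz case a self-similar ``horn'' supported along the contracting spiral $\{(\nu')^nx_0\}_{n\geq0}$, whose bump heights and radii both scale like $|\nu'|^n$, again keeping $\Lip(\varphi)$ small). Then $S^nx_0\to0$, so $x_0\in W^s(0,S)\setminus\{0\}$; pushing through $h$ (with $h(0)=0$) gives a nonzero element of $W^s(0,T)$, contradicting $W^s(0,T)=\{0\}$.

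The main obstacle is the perturbation step in the genuinely structurally stable case, where the invariant object must be realized with a \emph{small Lipschitz constant}, not merely small supremum norm. For the invariant circle this is exactly why I rotate by an irrational $\mu$ on a fixed-radius tube rather than trying to produce a periodic point: a periodic point would require a high-order root of unity near $\lambda$, and the forced small separation of its orbit would inflate $\Lip(\varphi)$ by a Diophantine factor. For the stable vector it is why the horn must be self-similar, so that height-over-radius stays bounded along the whole spiral. The remaining routine checks are that $S$ is a homeomorphism (via the perturbation lemma quoted in the Preliminaries) and that the construction indeed leaves $0$ fixed.
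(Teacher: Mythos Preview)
Your core strategy coincides with the paper's: pick $\lambda\in\sigma_a(T)$ on (or inside) $\T$, make a rank-one perturbation $S$ with a genuine eigenvector, conjugate, and contradict (positive) expansivity via the resulting bounded $T$-orbit. But your execution is considerably more complicated than necessary, and the Lipschitz-case constructions are not rigorously justified as written.

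The substantive issue is your ``tube'' and ``horn'' perturbations. In a general Banach space, a Lipschitz cutoff on a tube about the circle $C$ presupposes a Lipschitz nearest-point retraction onto $C$, which you have not supplied; the self-similar horn likewise needs a disjointness/overlap analysis you only gesture at. The paper avoids all of this with a one-line observation you missed: the rank-one map $\alpha(x)=\psi(x)(\lambda y - Ty)$ you already used in the $GL(X)$ case is \emph{linear}, hence globally Lipschitz with $\Lip(\alpha)=\|\alpha\|\le\eps$. Lemma~\ref{Extension} then truncates $\alpha$ outside $B_X$ to get $\varphi\in C_b(X)$ with $\|\varphi\|_\infty\le 2\eps$, $\Lip(\varphi)\le 3\eps$, and $T+\varphi=S$ on $B_X$. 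Since the relevant $S$-orbit (the circle $\{\lambda^n y\}$ in~(a), the spiral in~(b) with $|\lambda|\le 1$) stays in $B_X$, one has $(T+\varphi)^n=S^n$ on it and the $GL(X)$ argument transfers verbatim (the paper scales by a small $\gamma$ to handle the backward orbit).

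Two further simplifications the paper makes. First, your irrational $\mu$ and the minimality argument are unnecessary: using $\lambda\in\T$ directly gives $T^n(h(y))=h(\lambda^n y)\in h(\T y)$, which is bounded and nonzero, and any nonzero bounded two-sided orbit already contradicts expansivity (since the constant $c>1$ may be taken arbitrarily large). Second, your part~(b) detours through part~(a), Proposition~\ref{UnifExp}, and $W^s(0,T)=\{0\}$; the paper simply observes that if $\sigma(T)\cap\T\ne\emptyset$ and $0\notin\sigma(T)$, then $\partial\sigma(T)$ meets $\ov\D$, picks $\lambda\in\sigma_a(T)\cap\ov\D$, and repeats the part-(a) construction to get a bounded \emph{forward} orbit, contradicting positive expansivity in one step.
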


For the proof we will need the following simple fact.

\begin{lemma}\label{Extension}
If $\alpha : X \to X$ is a Lipschitz map, then there exists a bounded
Lipschitz map $\varphi : X \to X$ which extends $\alpha|_{B_X}$ and
satisfies $\|\varphi\|_\infty \leq \|\alpha|_{2B_X}\|_\infty$ and
$\Lip(\varphi) \leq 3 \Lip(\alpha)$.
\end{lemma}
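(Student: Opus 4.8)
The plan is to obtain $\varphi$ by \emph{precomposing} $\alpha$ with a Lipschitz retraction of $X$ onto the ball $2B_X$, rather than by multiplying $\alpha$ by a scalar cut-off function. The naive cut-off idea $\varphi = \lambda \cdot \alpha$, with $\lambda$ a scalar function equal to $1$ on $B_X$ and vanishing off $2B_X$, would introduce into $\Lip(\varphi)$ a term proportional to $\|\alpha|_{2B_X}\|_\infty$ (arising where $\lambda$ varies), and hence could never yield a bound of the pure form $\Lip(\varphi) \le 3\Lip(\alpha)$. The composition route avoids this altogether, since the Lipschitz constant of a composition is at most the product of the Lipschitz constants.

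Concretely, I would introduce the radial retraction $\rho : X \to 2B_X$ defined by $\rho(x) = x$ if $\|x\| \le 2$ and $\rho(x) = 2x/\|x\|$ if $\|x\| \ge 2$, and set $\varphi = \alpha \circ \rho$. Three points then have to be verified. First, $\rho$ fixes every point of $2B_X \supseteq B_X$, so $\varphi(x) = \alpha(x)$ whenever $\|x\| \le 1$; thus $\varphi$ extends $\alpha|_{B_X}$ (in fact $\alpha|_{2B_X}$). Second, since $\rho(X) \subseteq 2B_X$, we have $\|\varphi(x)\| = \|\alpha(\rho(x))\| \le \sup_{u \in 2B_X}\|\alpha(u)\| = \|\alpha|_{2B_X}\|_\infty$ for every $x \in X$, the supremum being finite because $\alpha$ is Lipschitz on the bounded set $2B_X$; together with the continuity of $\alpha$ and $\rho$ this shows $\varphi \in C_b(X)$. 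Third, $\Lip(\varphi) \le \Lip(\alpha)\,\Lip(\rho)$, so the whole Lipschitz estimate reduces to bounding $\Lip(\rho)$.

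The main, and essentially only, piece of work is therefore the elementary estimate for $\Lip(\rho)$. I would split into three cases according to the norms of the two points $x,y$: both in $2B_X$, where $\rho$ is the identity; both outside $2B_X$, where one compares $2x/\|x\|$ with $2y/\|y\|$; and the mixed case $\|x\| \le 2 \le \|y\|$, where one compares the fixed point $x$ with the projected point $2y/\|y\|$. In each case the triangle inequality together with the reverse triangle inequality $\left|\,\|x\|-\|y\|\,\right| \le \|x-y\|$ reduces matters to routine bookkeeping, the mixed case being the crux. This analysis yields $\Lip(\rho) \le 2$, comfortably below the claimed constant $3$, so there is ample slack and no sharpness issue to track. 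Assembling the three verifications gives $\|\varphi\|_\infty \le \|\alpha|_{2B_X}\|_\infty$ and $\Lip(\varphi) \le 2\Lip(\alpha) \le 3\Lip(\alpha)$, as required.
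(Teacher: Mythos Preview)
Your argument is correct and in fact yields the sharper bound $\Lip(\varphi)\le 2\Lip(\alpha)$; the case analysis for the radial retraction onto $2B_X$ goes through exactly as you outline, with the mixed case handled via $x-\tfrac{2y}{\|y\|}=(x-y)+\tfrac{y}{\|y\|}(\|y\|-2)$ and $\|y\|-2\le\|y\|-\|x\|\le\|x-y\|$.

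The paper takes a genuinely different route: it does use a scalar cut-off, but not the naive one you rule out. It first \emph{centers} $\alpha$ at the origin and defines $\varphi(x)=\alpha(0)+\rho(\|x\|)\bigl(\alpha(x)-\alpha(0)\bigr)$, with $\rho$ a piecewise-linear bump equal to $1$ on $[0,1]$ and $0$ on $[2,\infty)$. The point is that the dangerous term $\bigl(\rho(\|x\|)-\rho(\|y\|)\bigr)\bigl(\alpha(x)-\alpha(0)\bigr)$ is now controlled by $\Lip(\alpha)\,\|x\|\le 2\Lip(\alpha)$ rather than by $\|\alpha|_{2B_X}\|_\infty$, and together with the second term $\rho(\|y\|)\bigl(\alpha(x)-\alpha(y)\bigr)$ this gives $3\Lip(\alpha)$. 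Your composition approach is cleaner, avoids the centering trick, improves the constant, and even extends $\alpha|_{2B_X}$ rather than just $\alpha|_{B_X}$; on the other hand, the paper's $\varphi$ is eventually constant (equal to $\alpha(0)$ outside $2B_X$), a feature your construction does not share, though this is not needed anywhere in the applications.
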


Many variations of this lemma have appeared elsewhere, but we shall present
a proof below for the sake of completeness.

\begin{proof}
Let $\rho : \R \to [0,1]$ be given by $\rho = 1$ on $(-\infty,1]$,
$\rho(t) = 2-t$ for $t \in [1,2]$, and $\rho = 0$ on $[2,+\infty)$.
Define
$$
\varphi(x) = \alpha(0) + \rho(\|x\|) (\alpha(x) - \alpha(0)) \ \ \ \ \ (x \in X).
$$
Clearly, $\varphi = \alpha$ on $B_X$ and
$\|\varphi\|_\infty \leq \|\alpha|_{2B_X}\|_\infty$.
Let $x,y \in X$. If $x \in 2B_X$ then
\begin{align*}
\|\varphi(x) - \varphi(y)\|
&= \big\|\big(\rho(\|x\|) - \rho(\|y\|)\big)\big(\alpha(x) - \alpha(0)\big)
   + \rho(\|y\|) \big(\alpha(x)-\alpha(y)\big)\big\|\\
&\leq 3 \Lip(\alpha) \|x-y\|.
\end{align*}
The same holds if $y \in 2B_X$ and the inequality is trivial if both
$x$ and $y$ lie outside $2B_X$.
\end{proof}

\noindent
{\it Proof of Theorem \ref{HyperSS}.}
(a): Assume that $T$ is expansive but it is not uniformly expansive.
Then, there exists $\lambda \in \sigma_a(T) \cap \T$ \cite{JHed71}.
Given any $\eps > 0$, there exists $y \in S_X$ such that
$\|Ty - \lambda y\| \leq \eps$. By the Hahn-Banach extension theorem,
there is a linear functional $\psi$ on $X$ such that $\|\psi\| = 1$ and
$\psi(ay) = a$ for all $a \in \C$. Let $S \in \cB(X)$ be defined by
$$
Sx = Tx + \psi(x)(\lambda y - Ty).
$$
Then $\|S - T\| \leq \eps$ and $Sy = \lambda y$.

In the case $T$ is structurally stable relative to $GL(X)$, by choosing
$\eps > 0$ small enough, there is a homeomorphism $h : X \to X$ such that
$T \circ h = h \circ S$. Hence,
$$
T^n(h(y)) = h(\lambda^n y) \ \ \text{ for all } n \in \Z.
$$
Since the sequence $(h(\lambda^n y))_{n \in \Z}$ is bounded
(because it is contained in the compact set $\{h(\beta y) : \beta \in \T\}$)
and $h(y) \neq 0$, this contradicts the expansivity of $T$.

In the case $T$ is structurally stable, we apply Lemma~\ref{Extension}
to obtain a Lipschitz map $\varphi : X \to X$ such that $\varphi = S - T$
on $B_X$, $\|\varphi\|_\infty \leq 2\eps$ and $\Lip(\varphi) \leq 3\eps$.
By choosing $\eps > 0$ small enough, there is a homeomorphism $h : X \to X$
such that $T \circ h = h \circ (T + \varphi)$.
Since $T + \varphi = S$ on $B_X$, we have that $(T + \varphi)^{-1} = S^{-1}$
on $S(B_X)$. Let $\gamma \in (0,1]$ be such that $S(B_X) \supset \gamma B_X$.
Then,
$$
T^n(h(\gamma y)) = h(\lambda^n \gamma y) \ \ \text{ for all } n \in \Z.
$$
As before, this contradicts the expansivity of $T$.

\smallskip
\noindent (b): The proof of (b) is a simple modification of the proof of (a).
Indeed, assume that $T$ is positively expansive but it is not hyperbolic.
Then, $\sigma(T) \cap \T \neq \emptyset$. Since $0 \not\in \sigma(T)$,
the boundary of $\sigma(T)$ must intersect $\ov{\D}$. Thus, there exists
$\lambda \in \sigma_a(T) \cap \ov{\D}$. By arguing as in the proof of~(a),
we obtain a homeomorphism $h : X \to X$ such that
$$
T^n(h(y)) = h(\lambda^n y) \ \ \text{ for all } n \in \N,
$$
no matter if $T$ is structurally stable or structurally stable relative
to $GL(X)$. Since the sequence $(h(\lambda^n y))_{n \in \N}$ is bounded
and $h(y) \neq 0$, we have a contradiction with the hypothesis that $T$
is positively expansive.
\hfill $\Box$

\begin{remark}
For invertible operators $T$ with the property that $\sigma_a(T) = \sigma(T)$,
the above proof shows that if $T$ is expansive and structurally stable
(or structurally stable relative to $GL(X)$), then $T$ is hyperbolic.
In particular, this applies to invertible normal operators on Hilbert spaces.
\end{remark}

A well-known result in the area of dynamical systems, from the 1960's,
asserts that any invertible hyperbolic operator on any Banach space is
structurally stable \cite{PHar60,JPal68,CPug69}. It is also known that
these operators are structurally stable relative to $GL(X)$
\cite{Alves,JRob72} (for the convenience of the reader, we will present
a proof of this fact in an appendix at the end of the paper).
Our goal now is to establish the result below, which shows the existence
of structurally stable operators that are not hyperbolic and are not
structurally stable relative to $GL(X)$. Our examples will be invertible
bilateral weighted backward shifts
$$
B_w : (x_n)_{n \in \Z} \in X \mapsto (w_{n+1}x_{n+1})_{n \in \Z} \in X,
$$
where $X = \ell_p(\Z)$ $(1 \leq p < \infty)$ or $X = c_0(\Z)$.
We recall that $w = (w_n)_{n \in \Z}$ is a bounded sequence of scalars
with $\inf_{n \in \Z} |w_n| > 0$. The boundedness of $w$ is a necessary
and sufficient condition for $B_w$ to be a well-defined operator on $X$
and the condition $\inf_{n \in \Z} |w_n| > 0$ means that $B_w$ is invertible.
We also observe that in the proof of Theorem~\ref{SSShifts} below
we will use the characterization of shadowing for weighted shifts that
will be given in the next section (Theorem~\ref{Characterization3}) and
that has already been stated in the introduction.

\begin{theorem}\label{SSShifts}
Let $X = \ell_p(\Z)$ $(1 \leq p < \infty)$ or $X = c_0(\Z)$.
Every invertible bilateral weighted backward shift $B_w$ on $X$ which has
the shadowing property is structurally stable. In particular, if $w$
satisfies condition (C) of Theorem~\ref{Characterization3}, then $B_w$ is
structurally stable, but it is not hyperbolic and it is not structurally
stable relative to $GL(X)$.
\end{theorem}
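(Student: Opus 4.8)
The plan is to show that the shadowing property forces a \emph{generalized hyperbolic} splitting of $X$, and then to run a Palis--Pugh type conjugacy argument adapted to that splitting.

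First I would use the shadowing characterization (Theorem~\ref{Characterization3}) to produce a decomposition $X = M \oplus N$ into closed subspaces with $B_w(M)\subseteq M$ and $B_w^{-1}(N)\subseteq N$, on which $B_w$ contracts and expands uniformly and exponentially. In case (A) one takes $M = X$, $N=\{0\}$ (since $\|B_w^n\|\to 0$); in case (B) one takes $M=\{0\}$, $N=X$; and in case (C) one takes $M=\ov{\spa}\{e_m : m\le 0\}$ and $N=\ov{\spa}\{e_m : m\ge 1\}$, which is a topological direct sum. A direct computation from (C) shows $\|B_w^n|_M\|\le C\theta^n$ and $\|B_w^{-n}|_N\|\le C\theta^n$ for some $C\ge 1$, $0<\theta<1$: these norms are suprema and infima of products of consecutive weights, and the negative-tail (resp.\ positive-tail) estimate in (C) makes them decay geometrically, uniformly in the base index. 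Writing $T=B_w$ in block form with respect to $M\oplus N$, the inclusions $B_w(M)\subseteq M$ and $B_w^{-1}(N)\subseteq N$ give the upper--triangular form $T=\left(\begin{smallmatrix} A & B\\ 0 & E\end{smallmatrix}\right)$, where $A=T|_M$ is a uniform contraction ($\|A^n\|\le C\theta^n$) and $E$ is a uniform expansion ($\|E^{-n}\|\le C\theta^n$); the coupling block $B$ is exactly what distinguishes this from genuine hyperbolicity.

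Next, given $\varphi\in C_b(X)$ Lipschitz with $\|\varphi\|_\infty$ and $\Lip(\varphi)$ small, set $g=T+\varphi$, a homeomorphism by the cited fact. I look for a conjugacy $h=\mathrm{Id}+u$ with $h\circ g = T\circ h$, which is equivalent to $u(g(x))=Tu(x)-\varphi(x)$. Decomposing $u=u_M+u_N$ and applying the projections $P,Q$ onto $M,N$ turns this into the pair $u_N(g(x))=Eu_N(x)-Q\varphi(x)$ and $u_M(g(x))=Au_M(x)+Bu_N(x)-P\varphi(x)$. The first is self-contained and is solved by the absolutely convergent series $u_N(x)=\sum_{j\ge 1}E^{-j}Q\varphi(g^{j-1}x)$ (convergence from $\|E^{-j}\|\le C\theta^j$); with $u_N$ in hand, the second has bounded continuous forcing $Bu_N-P\varphi$ and is solved by $u_M(x)=\sum_{j\ge 0}A^j\big(Bu_N-P\varphi\big)(g^{-j-1}x)$ (convergence from $\|A^j\|\le C\theta^j$). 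Both series define bounded continuous maps, so $h=\mathrm{Id}+u$ is continuous with $u\in C_b(X)$ and $h\circ g=T\circ h$.

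The main obstacle is to upgrade $h$ to a homeomorphism. Here I would follow Palis--Pugh: solve the reverse conjugacy $h'\circ T=g\circ h'$ with $h'=\mathrm{Id}+u'$, whose defining equation $u'(Tx)=Tu'(x)+\varphi(x+u'(x))$ is genuinely nonlinear; I set it up as a fixed point of the map that, given $u'$, solves the same split linear system as above with forcing $\varphi\circ(\mathrm{Id}+u')$ and iterating $T$ instead of $g$. Using $\|A^n\|,\|E^{-n}\|\le C\theta^n$ one checks this map sends $C_b(X)$ into itself and is a contraction once $\Lip(\varphi)$ is small (the coupling $B$ only enters through the already-determined $u'_N$ and contributes a harmless bounded factor), giving a continuous $h'$. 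Finally, $h\circ h'$ commutes with $T$ and differs from $\mathrm{Id}$ by a map in $C_b(X)$; but the homogeneous equation $w\circ T=Tw$ with $w\in C_b(X)$ forces $w=0$ (solve the $N$-part by $w_N(x)=E^{-m}w_N(T^mx)\to 0$, then the $M$-part by $w_M(x)=A^mw_M(T^{-m}x)\to 0$), so $h\circ h'=\mathrm{Id}$, and symmetrically $h'\circ h=\mathrm{Id}$. Thus $h$ is a homeomorphism and $T$ is structurally stable. This fixed-point/uniqueness step, together with carrying the extra block $B$ through all the estimates, is the technical heart.

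For the final assertions: if $w$ satisfies (C) then $B_w$ has the shadowing property (Theorem~\ref{Characterization3}), hence is structurally stable by the above; and since (A) and (B) are exactly the hyperbolic cases while (C) excludes both (all weight-products cannot simultaneously decay and have a growing positive tail), $B_w$ is not hyperbolic. To see that $B_w$ is \emph{not} structurally stable relative to $GL(X)$, I would exploit a topological-conjugacy invariant that a small operator perturbation can destroy even though a small bounded Lipschitz perturbation cannot: condition (C) makes the open annulus $\{\lambda:\rho_-<|\lambda|<\rho_+\}$ (with $\rho_-<1<\rho_+$) consist of eigenvalues of $B_w$, each with one-dimensional eigenspace, so $\dim\ker(B_w-I)=1$. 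Using eigenvectors attached to eigenvalues near $1$, one builds, for every $\eps>0$, an invertible operator $S$ with $\|S-B_w\|\le\eps$ and $\dim\ker(S-I)\neq 1$, by a small finite-rank correction that adds or removes a fixed direction; since any topological conjugacy must carry $\mathrm{Fix}(S)=\ker(S-I)$ homeomorphically onto $\mathrm{Fix}(B_w)=\ker(B_w-I)$ and subspaces of different dimension are not homeomorphic, $S$ is not conjugate to $B_w$. The delicate point is controlling the norms of the biorthogonal functionals while keeping the perturbed fixed directions uniformly independent, which is precisely where the abundance of eigenvalues furnished by (C) is used.
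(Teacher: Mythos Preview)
Your argument for structural stability has a genuine gap. In the block form $T=\left(\begin{smallmatrix} A & B\\ 0 & E\end{smallmatrix}\right)$ with respect to $M\oplus N$, the operator $E=QT|_N$ is \emph{not} invertible: since $B_we_1=w_1e_0\in M$, one has $Ee_1=0$, and in fact $E$ is a unilateral weighted backward shift on $N$ (with weights $w_2,w_3,\dots$), which is never invertible. Hence your series $u_N(x)=\sum_{j\ge1}E^{-j}Q\varphi(g^{j-1}x)$ is undefined. What is true is that $T^{-1}|_N:N\to N$ is a contraction, but that map is not surjective either (its range misses $e_1$), so it is not the inverse of $E$. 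The same issue breaks your uniqueness step: under condition~(C) the eigenvalue $1$ lies in $\sigma_p(B_w)$, and for any nonzero fixed vector $v$ the constant map $w(x)\equiv v$ lies in $C_b(X)$ and satisfies $w\circ T=T\circ w$; its $N$-part $v_N$ is a nonzero fixed point of $E$, so the claim ``$w_N(x)=E^{-m}w_N(T^mx)\to0$'' is false. This non-uniqueness is precisely the content of the remark following the theorem in the paper: the conjugacy $h$ is \emph{not} unique in the non-hyperbolic case.

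The paper circumvents this by abandoning the $M\oplus N$ block picture and using the coordinate structure of the shift instead. It works in the closed subspace $C_{b,0}(X)=\{\varphi\in C_b(X):\varphi_0=0\}$, writes the equation $\varphi(B_wx)-B_w\varphi(x)=\alpha(x)$ coordinate by coordinate, and shows that imposing $\varphi_0=0$ determines all other $\varphi_i$ uniquely by explicit sums in the weights that converge thanks to~(C). This makes the linear map $F:C_{b,0}(X)\to C_b(X)$, $F\varphi=\varphi\circ B_w-B_w\circ\varphi$, a bijection; the conjugacies $h=I+u$ and $h'=I+v$ are then produced by contraction mappings, and $h'\circ h=I$, $h\circ h'=I$ follow from uniqueness \emph{within $C_{b,0}(X)$}. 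Restricting to $C_{b,0}(X)$ is exactly what kills the one degree of freedom (the kernel of $E$, equivalently the eigenvector ambiguity) that obstructs your block argument.

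For the failure of structural stability relative to $GL(X)$, the paper also takes a different route from your fixed-point-dimension idea: it shows (Lemma~\ref{Hypercyclic}) that the set of hypercyclic operators has empty interior in $\cB(X)$, observes that condition~(C) forces $B_w$ to satisfy the frequent hypercyclicity criterion and hence be hypercyclic, and uses that hypercyclicity is preserved under topological conjugacy. Your approach is plausible in outline, but the ``delicate point'' you flag about controlling biorthogonal norms is a real obstacle; the hypercyclicity argument sidesteps it entirely.
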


Hence, simple concrete examples of nonhyperbolic structurally stable
$B_w$'s can be obtained by putting $w_n = 1/a$ if $n < 0$ and $w_n = a$
if $n \geq 0$, where $a$ is any real number greater than $1$.

\smallskip
The following simple fact concerning hypercyclic operators (i.e., operators
that have a dense orbit) will be used in the proof of the above theorem.

\begin{lemma}\label{Hypercyclic}
For every complex Banach space $X$, the set $HC(X)$ of all hypercyclic
operators on $X$ has empty interior in $\cB(X)$.
\end{lemma}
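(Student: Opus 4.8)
The plan is to prove the equivalent statement that the complement of $HC(X)$ is dense in $\cB(X)$: it suffices to produce, in every neighborhood of an arbitrary $T \in \cB(X)$, an operator that is \emph{not} hypercyclic. I would achieve this by a small rank-one perturbation that forces the adjoint to acquire a genuine eigenvalue. A pleasant feature of this route is that it is uniform: no separate treatment of the finite-dimensional or non-separable cases (where $HC(X) = \emptyset$ anyway) is needed, since the very same construction works for every $T$, hypercyclic or not.

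The obstruction I would exploit is the elementary fact that \emph{if $S^*$ has an eigenvalue, then $S$ is not hypercyclic}. I would dispose of this in one line: if $S^*\phi = \lambda\phi$ with $\phi \neq 0$ and $x$ were a hypercyclic vector for $S$, then $\phi(S^n x) = \lambda^n\phi(x)$ for all $n \geq 0$, so the set $\{\lambda^n\phi(x) : n \geq 0\}$ would have to be dense in $\phi(X) = \C$; but its moduli $|\lambda|^n|\phi(x)|$ form a monotone sequence, so this set accumulates on at most one circle centred at the origin and cannot be dense, a contradiction.

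Next, given $T$ and $\eps > 0$, I would install such an adjoint eigenvalue by a perturbation of norm less than $\eps$. The spectrum $\sigma(T^*)$ is a nonempty compact subset of $\C$, hence has nonempty boundary, and, as recalled in the Preliminaries, that boundary lies in $\sigma_a(T^*)$; thus I may fix $\lambda \in \sigma_a(T^*)$ together with a functional $\phi$ with $\|\phi\| = 1$ for which $\psi := \lambda\phi - T^*\phi$ has norm as small as desired. Choosing $u \in X$ with $\phi(u) = 1$ and $\|u\|$ close to $1$, I define the rank-one operator $A$ by $Ax = \psi(x)\,u$ and set $S = T + A$. A direct computation gives $A^*\phi = \psi$, whence $S^*\phi = T^*\phi + \psi = \lambda\phi$; so $\lambda$ is an eigenvalue of $S^*$ and $S$ is not hypercyclic, while $\|S - T\| = \|A\| \leq \|\psi\|\,\|u\| < \eps$.

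The one point that demands care—and which I regard as the heart of the argument—is controlling the size of the perturbation. This reduces to two observations: that $\sigma_a(T^*) \neq \emptyset$, so an approximate eigenfunctional $\phi$ with $\|\lambda\phi - T^*\phi\|$ arbitrarily small is available; and that $\phi$ can be paired with a vector $u$ satisfying $\phi(u) = 1$ and $\|u\|$ near $1$, which is possible precisely because $\|\phi\| = 1$. By contrast, the naive idea of perturbing $T$ to a scalar multiple $cT$ with $|c|$ slightly below $1$ only pushes the spectral radius below $1$—and so destroys hypercyclicity—when $r(T) = 1$; this limitation is exactly why I prefer the rank-one construction above, which succeeds for every $T$ irrespective of $r(T)$.
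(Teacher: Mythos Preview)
Your proof is correct and follows essentially the same approach as the paper's: pick $\lambda \in \sigma_a(T^*)$, perturb $T$ by a rank-one operator so that $\lambda$ becomes a genuine eigenvalue of $S^*$, and conclude $S \notin HC(X)$ because the orbit $\{\lambda^n\phi(x)\}$ cannot be dense in $\C$. The only cosmetic difference is that the paper first defines the perturbation $R$ on $X^*$ and invokes $w^*$--$w^*$ continuity to obtain $Q \in \cB(X)$ with $Q^* = R$, whereas you write the rank-one operator $Ax = \psi(x)\,u$ directly on $X$; your route is marginally more elementary but the two constructions yield the same operator.
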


\begin{proof}
Fix $T \in HC(X)$ and $\eps > 0$. Let $T^{*}$ be the adjoint operator
of $T$ acting on the dual space $X^*$ of $X$ by $T^*x^* = x^* \circ T$.
Take $\lambda \in \sigma_a(T^*)$ and let $y^* \in S_{X^*}$ be such that
$$
\|T^*y^* - \lambda y^*\| \leq \eps/2.
$$
Choose $y \in S_X$ such that $|y^*(y)| \geq 1/2$. Let $R \in \cB(X^*)$ be
defined by
$$
Rx^* = \frac{x^*(y)}{y^*(y)} \cdot (\lambda y^* - T^*y^*).
$$
Since $R$ is $w^*$-$w^*$-continuous, there exists $Q \in \cB(X)$ with
$Q^* = R$ \cite[Theorem~3.1.11]{RMeg98}. Let $S = T+Q \in \cB(X)$.
Then $\|S-T\| = \|Q\| = \|R\| \leq \eps$ and $S^*y^* = \lambda y^*$.
This second property implies that $S \not\in HC(X)$, for otherwise
there would exist $x \in X$ such that the points
$y^*(S^nx) = ((S^*)^ny^*)(x) = \lambda^n y^*(x)$ ($n \in \N_0$)
form a dense set in $\C$, which is impossible.
\end{proof}

\noindent
{\it Proof of Theorem~\ref{SSShifts}.}
We may assume that $B_w$ has the shadowing property but it is not hyperbolic.
Then, by Theorem~\ref{Characterization3},
\begin{equation}\label{ssnh1}
\lim_{n \to \infty} \sup_{k \in \N}
     |w_{-k} w_{-k-1} \cdots w_{-k-n}|^\frac{1}{n} < 1
\ \ \text{ and } \ \ \lim_{n \to \infty} \sup_{k \in \N}
     |w_k w_{k+1} \cdots w_{k+n}|^{-\frac{1}{n}} < 1.
\end{equation}

Our goal is to find a homeomorphism $h: X \to X$ such that
$$
h \circ B_w = S \circ h,
$$
where $S = B_w + \alpha$ with $\alpha \in C_b(X)$ a Lipschitz map such that
$\|\alpha\|_\infty$ and $\Lip(\alpha)$ are sufficiently small. By putting
$h = I + \varphi$, we have that the above equality is equivalent to
$$
\varphi(B_wx) - B_w(\varphi(x)) = \alpha(x + \varphi(x)).
$$
We will first solve the equation 
\begin{equation}\label{ssnh2}
\varphi(B_wx) - B_w(\varphi(x)) = \alpha(x).
\end{equation}
For this purpose, for each $\varphi \in C_b(X)$, we write
$\varphi(x) = \sum_{i \in \Z} \varphi_i(x) e_i$. Let
$$
C_{b,0}(X) = \{\varphi \in C_b(X) : \varphi_0 = 0\},
$$
which is a closed subspace of $C_b(X)$. We claim that the bounded
linear map
$$
F : \varphi \in C_{b,0}(X) \mapsto
    \varphi \circ B_w - B_w \circ \varphi \in C_b(X)
$$
is bijective. Indeed, fix $\alpha \in C_b(X)$.
Equation (\ref{ssnh2}) is equivalent to
$$
\varphi_i(B_wx) - w_{i+1} \varphi_{i+1}(x) = \alpha_i(x) \ \
\text{ for all } i \in \Z,
$$
which implies that
\begin{equation}\label{AAA}
\varphi_i(x) = \frac{\varphi_0(B_w^ix)}{w_1 \cdots w_i}
  - \sum_{j=0}^{i-1} \frac{\alpha_j(B_w^{i-j-1}x)}{w_{j+1} \cdots w_i}
\end{equation}
and
\begin{equation}\label{BBB}
\varphi_{-i}(x) = w_{-i+1} \cdots w_0 \, \varphi_0(B_w^{-i}x)
  + \sum_{j=1}^i w_{-i+1} \cdots w_{-j} \, \alpha_{-j}(B_w^{-i+j-1}x)
\end{equation}
(where in the case $j = i$ we consider the ``empty product''
$w_{-i+1} \cdots w_{-j}$ to be $1$), for every $i \in \N$. Hence,
the only possible solution of (\ref{ssnh2}) in $C_{b,0}(X)$ is given by
$$
\varphi_0 = 0, \ \ \varphi_i(x) = -\sum_{j=0}^{i-1}
\frac{\alpha_j(B_w^{i-j-1}x)}{w_{j+1} \cdots w_i} \ \text{ and } \
\varphi_{-i}(x) = \sum_{j=1}^i w_{-i+1} \cdots w_{-j} \,
\alpha_{-j}(B_w^{-i+j-1}x).
$$
On the other hand, by defining $\varphi$ as above, we have that
$\varphi \in C_{b,0}(X)$ and (\ref{ssnh2}) holds, which proves our claim.
Indeed, let us verify that $\varphi \in C_{b,0}(X)$ in the case
$X = \ell_p(\Z)$ (the case $X = c_0(\Z)$ will be left to the reader).
Note that $\varphi = f + g$, where
$$
f(x) = \sum_{i \in \N} \varphi_{-i}(x) e_{-i} \ \ \text{ and } \ \
g(x) = \sum_{i \in \N} \varphi_i(x) e_i.
$$
Hence, it is enough to prove that $f \in C_b(X)$ and $g \in C_b(X)$.
By (\ref{ssnh1}) and Lemma~\ref{Unilateral1},
there exist $s \in (0,1)$ and $\beta > 1$ such that
\begin{equation}\label{ssnh2b}
|w_{-j} w_{-j-1} \cdots w_{-j-k+1}| \leq \beta s^k \ \text{ and } \
\frac{1}{|w_j w_{j+1} \cdots w_{j+k-1}|} \leq \beta s^k
\ \ \text{ for all } j,k \in \N.
\end{equation}
Let $g_n(x) = \sum_{i=1}^n \varphi_i(x) e_i$ ($n \in \N$).
If we rearrange the double series that defines $g_n(x)$ by putting together
all terms that have the same number $t$ of $w_j$'s in the denominator
(a sort of ``diagonal ordering''), then we obtain
$$
g_n(x) = - \sum_{t=1}^n \Bigg(\sum_{k=t}^n
  \frac{\alpha_{k-t}(B_w^{t-1}x)}{w_{k-t+1} \cdots w_k} \,e_k\Bigg).
$$
Hence, it follows from Minkowski's inequality that
\begin{align}\label{ssnh2c}
\big(|\varphi_1(x)|^p + \cdots + |\varphi_n(x)|^p\big)^\frac{1}{p}
  &= \|g_n(x)\| \leq \sum_{t=1}^n \Bigg\|\sum_{k=t}^n
     \frac{\alpha_{k-t}(B_w^{t-1}x)}{w_{k-t+1} \cdots w_k} \,e_k\Bigg\|\\
  &= \sum_{t=1}^n \Bigg(\sum_{k=t}^n
     \Bigg|\frac{\alpha_{k-t}(B_w^{t-1}x)}{w_{k-t+1} \cdots w_k}\Bigg|^p
     \Bigg)^\frac{1}{p}\notag\\
  &\leq \sum_{t=1}^n \beta \|\alpha(B_w^{t-1}x)\| s^t
   \leq \frac{\beta}{1-s}\,\|\alpha\|_\infty < \infty,\notag
\end{align}
for all $x \in X$ and $n \in \N$. This shows that
$g\big(\ell_p(\Z)\big) \subset \ell_p(\Z)$ and that $g$ is bounded.
Since
$$
g(x) = -\sum_{t=1}^\infty \Bigg(\sum_{k=t}^\infty
     \frac{\alpha_{k-t}(B_w^{t-1}x)}{w_{k-t+1} \cdots w_k}\, e_k\Bigg),
$$
we have that
\begin{align*}
\|g(x)-g(y)\| &\leq \sum_{t=1}^\infty \Bigg(\sum_{k=t}^\infty
     \Bigg|\frac{\alpha_{k-t}(B_w^{t-1}x)-\alpha_{k-t}(B_w^{t-1}y)}
     {w_{k-t+1} \cdots w_k}\Bigg|^p\Bigg)^\frac{1}{p}\\
  &\leq \beta \sum_{t=1}^\infty \|\alpha(B_w^{t-1}x) - \alpha(B_w^{t-1}y)\|s^t,
\end{align*}
which easily implies the continuity of $g$. Thus, $g \in C_b(X)$.
The proof that $f \in C_b(X)$ is analogous, but we have to use the first
estimate in (\ref{ssnh2b}) instead of the second one.

Now, choose $0 < \eps < \|F^{-1}\|^{-1}$, let $\alpha \in C_b(X)$ be a
Lipschitz map with $\|\alpha\|_\infty \leq \eps$ and $\Lip(\alpha) \leq \eps$,
and put $S = B_w + \alpha$. We claim that there exists a unique
$u \in C_{b,0}(X)$ such that the map $h = I + u$ satisfies
\begin{equation}\label{ssnh3}
h \circ B_w = S \circ h.
\end{equation}
Indeed, let $G : C_{b,0}(X) \to C_{b,0}(X)$ be the map given by
$$
G(\varphi) = F^{-1}(\alpha \circ (I + \varphi)).
$$
Since
$$
\|G(\psi) - G(\varphi)\|_\infty \leq \Lip(\alpha) \|F^{-1}\|
\|\psi-\varphi\|_\infty,
$$
we have that $G$ is a contraction, and so it has a unique fixed point in
$C_{b,0}(X)$. Since (\ref{ssnh3}) is equivalent to $G(u) = u$, our second
claim is proved.

Now, we claim that there is a unique $v \in C_{b,0}(X)$ such that the
map $h' = I + v$ satisfies
\begin{equation}\label{ssnh4}
h' \circ S = B_w \circ h'.
\end{equation}
Indeed, (\ref{ssnh4}) is equivalent to the equation
$$
v \circ S - B_w \circ v = - \alpha.
$$
With $v_0 = 0$, this equation implies that
$$
v_i(x) = \sum_{j=0}^{i-1} \frac{\alpha_j(S^{i-j-1}x)}{w_{j+1} \cdots w_i}
\ \text{ and } \
v_{-i}(x) = -\sum_{j=1}^i w_{-i+1} \cdots w_{-j} \, \alpha_{-j}(S^{-i+j-1}x),
$$
for every $i \in \N$. So, if there is a solution $v$ in $C_{b,0}(X)$,
then it must be unique. However, by defining $v$ as above, we have that
$v \in C_{b,0}(X)$ and $h' = I + v$ satisfies (\ref{ssnh4}).
This proves our third claim.

By (\ref{ssnh3}) and (\ref{ssnh4}),
\begin{equation}\label{ssnh5}
h' \circ h \circ B_w = B_w \circ h' \circ h.
\end{equation}
Since $h' \circ h = I + \psi$ with $\psi \in C_{b,0}(X)$, (\ref{ssnh5})
is equivalent to
$$
\psi_i(B_wx) = w_{i+1} \psi_{i+1}(x) \ \ \text{ for all } i \in \Z.
$$
Since $\psi_0 = 0$, this clearly implies that $\psi = 0$. Thus,
\begin{equation}\label{ssnh6}
h' \circ h = I.
\end{equation}

Let us now prove that
\begin{equation}\label{ssnh7}
h \circ h' = I.
\end{equation}
It is enough to show that the only solution to
\begin{equation}\label{ssnh8}
(I + \theta) \circ S = S \circ (I + \theta),
\end{equation}
with $\theta \in C_{b,0}(X)$, is $\theta = 0$.
For this purpose, consider the bounded linear map
$$
H : \gamma \in C_{b,0}(X) \mapsto
    \gamma \circ S - B_w \circ \gamma \in C_b(X).
$$
We have that $H$ is bijective. Indeed, given $\eta \in C_b(X)$, the equation
$$
\gamma(Sx) - B_w(\gamma(x)) = \eta(x)
$$
is equivalent to
$$
\gamma_i(Sx) - w_{i+1} \gamma_{i+1}(x) = \eta_i(x) \ \
\text{ for all } i \in \Z,
$$
and so we can argue as in the case of the map $F$.
Define $K : C_{b,0}(X) \to C_{b,0}(X)$ by
$$
K(\varphi) = H^{-1}(\alpha \circ (I + \varphi) - \alpha).
$$
We have that
$$
\|K(\psi) - K(\varphi)\|_\infty \leq \Lip(\alpha) \|H^{-1}\|
\|\psi-\varphi\|_\infty.
$$
Moreover, we know that
$$
H^{-1}(\eta)(x) = \sum_{i \in \N} \gamma_{-i}(x) e_{-i}
                + \sum_{i \in \N} \gamma_i(x) e_i,
$$
where
$$
\gamma_i(x) = -\sum_{j=0}^{i-1} \frac{\eta_j(S^{i-j-1}x)}{w_{j+1} \cdots w_i}
\ \text{ and } \
\gamma_{-i}(x) = \sum_{j=1}^i w_{-i+1} \cdots w_{-j} \, \eta_{-j}(S^{-i+j-1}x).
$$
A computation analogous to (\ref{ssnh2c}) shows that
$$
\|H^{-1}(\eta)\|_\infty \leq \frac{2\beta}{1-s}\,\|\eta\|_\infty.
$$
This shows that, although $H$ depends on $\eps$, $\|H^{-1}\|$ is bounded
by a constant which is independent of $\eps$. Thus, by choosing $\eps > 0$
small enough, we can guarantee that $K$ is a contraction, and so it has
a unique fixed point in $C_{b,0}(X)$. Since
$K(0) = 0$ and (\ref{ssnh8}) is equivalent to $K(\theta) = \theta$,
we conclude that $\theta = 0$ is the only solution to (\ref{ssnh8})
in $C_{b,0}(X)$.

By (\ref{ssnh6}) and (\ref{ssnh7}), $h$ is a homeomorphism, which proves
that $B_w$ is structurally stable.
We mentioned in the introduction and will prove in the next section that,
in the present case, $B_w$ satisfies the frequent hypercyclicity criterion.
Hence, it cannot be structurally stable relative to $GL(X)$ in view of
Lemma~\ref{Hypercyclic}.
\hfill $\Box$

\begin{remark}
An invertible operator $T$ on a Banach space $X$ is said to be
{\em strongly structurally stable} \cite{JRob72} if for every $\gamma > 0$
there exists $\eps > 0$ such that the following property holds:
for any Lipschitz map $\varphi \in C_b(X)$ with
$\|\varphi\|_\infty \leq \eps$ and $\Lip(\varphi) \leq \eps$,
there is a homeomorphism $h : X \to X$ such that
$h \circ T = (T + \varphi) \circ h$ and $\|h - I\|_\infty \leq \gamma$.
So, it is now required that the homeomorphism $h$ conjugating $T$ and
$T + \varphi$ is close to the identity operator.
The proof of the fact that invertible hyperbolic operators are structurally
stable given in \cite{CPug69}, for instance, shows that these operators
are in fact strongly structurally stable. We also observe that the proof
of Theorem~\ref{SSShifts} actually shows that $B_w$ is strongly structurally
stable whenever $w$ satisfies condition~(C) of Theorem~\ref{Characterization3}.
Indeed, recall that the homeomorphism $h$ conjugating $B_w$ and $S$
constructed in the proof of Theorem~\ref{SSShifts} has the form
$h = I + u$, where $u$ is the unique fixed point of the map $G$. Hence,
$$
\|u\|_\infty = \|G(u)\|_\infty = \|F^{-1}(\alpha \circ (I + \varphi))\|_\infty
      \leq \|F^{-1}\| \|\alpha\|_\infty \leq \|F^{-1}\| \eps.
$$
Hence, we can make $\|u\|_\infty$ as small as we want by choosing $\eps$
small enough.
\end{remark}

\begin{remark}
{\bf (a)}
Note that the homeomorphism $h$ of the form $I + u$ constructed in the proof
of Theorem~\ref{SSShifts} is not unique, contrary to what happens in the
hyperbolic case. Indeed, if instead of the subspace $C_{b,0}(X)$ we consider
the subspace $C_{b,j}(X) = \{\varphi \in C_b(X) : \varphi_j = 0\}$ for some
$j \neq 0$, then we would obtain a different $u$ and so a different $h$.\\
{\bf (b)}
The proof of Theorem~\ref{SSShifts} works as well if we replace the space
$C_b(X)$ by the space $U_b(X)$ of all bounded uniformly continuous maps
$\varphi : X \to X$ endowed with the supremum norm. Thus, the homeomorphism
$h = I + u$ conjugating $B_w$ and $S$ can be chosen to be a uniform
homeomorphism.
\end{remark}

We saw in Theorem~\ref{SSShifts} that for the class of invertible weighted
shifts on the spaces $\ell_p(\Z)$ ($1 \leq p < \infty$) and $c_0(\Z)$,
shadowing implies structural stability. This is also true for invertible
operators on finite-dimensional spaces. These remarks motivate the
following question.

\begin{question}
Is it true that every invertible operator on a Banach space that has the
shadowing property is structurally stable? How about the converse?
\end{question}

\begin{proposition}
Let $X = \ell_p(\Z)$ $(1 \leq p < \infty)$ or $X = c_0(\Z)$.
If an invertible bilateral weighted backward shift $B_w$ on $X$
is expansive and strongly structurally stable, then it is hyperbolic.
\end{proposition}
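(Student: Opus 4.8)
The plan is to argue by contradiction and reduce the entire problem to the existence of one bounded linear invariant of $B_w$ that an arbitrarily small affine perturbation destroys. First I would record the easy reduction: strong structural stability trivially implies structural stability (fix any single value of $\gamma$ in its definition), so, as $B_w$ is expansive, Theorem~\ref{HyperSS}(a) applies and gives that $B_w$ is uniformly expansive, i.e. $\sigma_a(B_w) \cap \T = \emptyset$. Assume now, for contradiction, that $B_w$ is not hyperbolic, so $\sigma(B_w) \cap \T \neq \emptyset$. Since $\sigma_a(B_w)$ contains the boundary of $\sigma(B_w)$ and misses $\T$, we have $\partial\sigma(B_w) \cap \T = \emptyset$; hence $\T$ is contained in the union of the two disjoint open sets $\operatorname{int}\sigma(B_w)$ and $\C \setminus \sigma(B_w)$. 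As $\T$ is connected it lies entirely in one of them, and $\sigma(B_w) \cap \T \neq \emptyset$ forces $\T \subset \operatorname{int}\sigma(B_w)$. In particular $1 \in \sigma(B_w) \setminus \sigma_a(B_w)$.

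The next step is to manufacture an eigenvector of the adjoint. Because $1 \notin \sigma_a(B_w)$, the operator $B_w - I$ is bounded below, hence injective with closed range; because $1 \in \sigma(B_w)$ it is not invertible, so this range is a proper closed subspace of $X$. Choosing a nonzero $y^* \in X^*$ that annihilates the range yields $y^*\big((B_w - I)x\big) = 0$ for all $x$, that is, $y^* \circ B_w = y^*$. I then fix $v_0 \in X$ with $y^*(v_0) = 1$.

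The heart of the argument is to perturb by a constant map and read off a conserved-quantity obstruction. Given $\delta > 0$, set $\varphi \equiv \delta v_0$ and $S = B_w + \varphi$; then $\varphi \in C_b(X)$ with $\|\varphi\|_\infty = \delta\|v_0\|$ and $\Lip(\varphi) = 0$, both below any prescribed threshold once $\delta$ is small. By the definition of strong structural stability (say with $\gamma = 1$) there is a homeomorphism $h = I + u$ with $\|u\|_\infty \leq 1$ and $h \circ B_w = S \circ h$. Writing $h^{-1} = I + u'$, the identity $u'(y) = -u(h^{-1}(y))$ gives $\|u'\|_\infty \leq \|u\|_\infty \leq 1$, so $g := y^* \circ h^{-1} = y^* + y^* \circ u'$ satisfies $\|g - y^*\|_\infty \leq \|y^*\|$. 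On the one hand $g$ is $S$-invariant: using $h^{-1} \circ S = B_w \circ h^{-1}$ and $y^* \circ B_w = y^*$ one computes $g \circ S = y^* \circ h^{-1} \circ S = y^* \circ B_w \circ h^{-1} = g$, so $g(S^n x)$ is constant in $n$. On the other hand, from $y^*(Sx) = y^*(B_w x) + \delta\,y^*(v_0) = y^*(x) + \delta$ one gets $y^*(S^n x) = y^*(x) + n\delta \to \infty$, whence $g(S^n x) = y^*(S^n x) + y^*(u'(S^n x))$ also tends to $\infty$. This contradicts the constancy of $g(S^n x)$, and the contradiction forces $B_w$ to be hyperbolic.

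The step I expect to need the most care — and the crucial one — is the transport of the invariant, for it is exactly here that \emph{strong} (rather than plain) structural stability is indispensable: only the uniform bound $\|h - I\|_\infty \leq \gamma$ keeps $g = y^* \circ h^{-1}$ within bounded distance of the linear functional $y^*$, and hence makes the linear divergence of $y^*(S^n x)$ incompatible with the $S$-invariance of $g$. A conjugating homeomorphism that merely existed, without closeness to the identity, could distort $y^*$ without control and the argument would break down; this is precisely why the hypothesis is phrased with strong structural stability. A secondary point to check carefully is the elementary spectral bookkeeping of the first paragraph, which pins a point of $\sigma(B_w) \setminus \sigma_a(B_w)$ at $\lambda = 1$ and thereby hands $1$ to the point spectrum of the adjoint. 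I would also remark that the proof never uses the weighted-shift structure beyond what Theorem~\ref{HyperSS} already supplies, so it in fact proves the statement for any invertible operator on a complex Banach space.
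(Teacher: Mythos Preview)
Your argument is correct and takes a genuinely different route from the paper's. The paper's proof is coordinate-based: after invoking Theorem~\ref{HyperSS}(a) it appeals to the explicit characterization of uniform expansivity for bilateral weighted shifts from \cite{BerCirDarMesPuj18}, isolates the non-hyperbolic case~(c), extracts the divergences $\lim_i |w_1\cdots w_i|^{-1}=\infty$ and $\lim_i |w_{-i+1}\cdots w_0|=\infty$, and then plugs the constant perturbation $\alpha\equiv(\eps/2)e_0$ into the recursion formulas (\ref{AAA})--(\ref{BBB}) for the coordinates of $\varphi=h-I$ to force first $\varphi_0\equiv 0$ and then $|\varphi_i|\to\infty$, contradicting boundedness. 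Your proof replaces all of this shift-specific bookkeeping by a single spectral/functional-analytic observation: uniform expansivity together with non-hyperbolicity places $1$ in $\sigma(B_w)\setminus\sigma_a(B_w)$, hence in $\sigma_p(B_w^*)$, and the resulting adjoint eigenvector produces a conserved quantity $g=y^*\circ h^{-1}$ that a constant perturbation forces to drift linearly. The trade-off: the paper's computation illuminates exactly how the weight asymptotics obstruct the conjugacy, while your argument is shorter, conceptually cleaner, and---as you note---never uses the shift structure, so it in fact establishes the result for \emph{every} invertible operator on a complex Banach space. This is worth emphasizing, since it gives a positive answer to the ``strong structural stability'' variant of the open Question posed immediately after the proposition.
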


\begin{proof}
We assume that $B_w$ is expansive, strongly structurally stable
and not hyperbolic, and we will reach a contradiction.
By Theorem~\ref{HyperSS}(a), $B_w$ is uniformly expansive.
It was proved in \cite[Theorem~E]{BerCirDarMesPuj18} that $B_w$ is
uniformly expansive if and only if one of the following conditions holds:
\begin{itemize}
\item [(a)] $\displaystyle \lim_{n \to \infty} \inf_{k \in \Z}
             |w_{k+1} \cdots w_{k+n}|^{-1} = \infty$.
\item [(b)] $\displaystyle \lim_{n \to \infty} \inf_{k \in \Z}
             |w_{k-n+1} \cdots w_k| = \infty$.
\item [(c)] $\displaystyle \lim_{n \to \infty} \inf_{k \in \N}
             |w_{k+1} \cdots w_{k+n}|^{-1} = \infty$ and
            $\displaystyle \lim_{n \to \infty} \inf_{k \in -\N}
             |w_{k-n+1} \cdots w_k| = \infty$.
\end{itemize}
By the version of Lemma~\ref{Unilateral1} with $\Z$ instead of $\N$,
we see that (a) and (b) are equivalent to
$$
\lim_{n \to \infty} \sup_{k \in \Z} |w_{k+1} \cdots w_{k+n}|^\frac{1}{n} < 1
\ \ \text{ and } \ \
\lim_{n \to \infty} \sup_{k \in \Z} |w_{k-n+1} \cdots w_k|^{-\frac{1}{n}} < 1,
$$
respectively, which are exactly the cases in which $B_w$ is hyperbolic
\cite[Remark~35]{BerCirDarMesPuj18}. Since we are assuming that $B_w$
is not hyperbolic, we conclude that (c) must hold. In particular,
\begin{equation}\label{CCC}
\lim_{i \to \infty} \frac{1}{|w_1 \cdots w_i|} = \infty
\ \ \text{ and } \ \
\lim_{i \to \infty} |w_{-i+1} \cdots w_0| = \infty.
\end{equation}
Let $\eps > 0$ be associated to $\gamma = 1$ according to the definition
of strong structural stability. Let $\alpha \in C_b(X)$ be the constant
map given by $\alpha(x) = (\eps/2)e_0$. There exists $\varphi \in C_b(X)$
with $\|\varphi\|_\infty \leq 1$ such that $h = I + \varphi$ is a
homeomorphism satisfying
$$
h \circ B_w = (B_w + \alpha) \circ h.
$$
This equation is equivalent to
$$
\varphi(B_wx) - B_w(\varphi(x)) = \alpha(x + \varphi(x)),
$$
which is the same as (\ref{ssnh2}) since $\alpha$ is constant.
Thus, $\varphi$ must satisfy equations (\ref{AAA}) and (\ref{BBB}), and so
$$
\varphi_i(x) = \frac{\varphi_0(B_w^ix)}{w_1 \cdots w_i}
              -\frac{\eps}{2 w_1 \cdots w_i}
\ \ \text{ and } \ \
\varphi_{-i}(x) = w_{-i+1} \cdots w_0\, \varphi_0(B_w^{-i}x),
$$
for each $i \in \N$. Since $\varphi$ is bounded, the second equality
in (\ref{CCC}) shows that
$$
\varphi_0(x) = \frac{\varphi_{-i}(B_w^ix)}{w_{-i+1} \cdots w_0}
             \to 0 \ \text{ as } i \to \infty,
$$
proving that $\varphi_0$ is identically zero. Hence,
$$
\varphi_i(x) = -\frac{\eps}{2 w_1 \cdots w_i} \ \ \text{ for all } i \in \N.
$$
Now, the first equality in (\ref{CCC}) leads to a contradiction with the
boundedness of $\varphi$. This completes the proof.
\end{proof}

The previous results motivate the following question.

\begin{question}
If an invertible operator on a Banach space is expansive and structurally
stable, is it necessarily hyperbolic?
\end{question}

The following question also seems to be open.

\begin{question}
Do hyperbolicity and structural stability relative to $GL(X)$ coincide?
\end{question}

%%%%%%%%%%%%%%%%%%%%%%%%%%%%%%%%%%%%%%%%%%%%%%%%%%%%%%%%%%%%%%%%%%%%%%%%%%%%%

\section{Weighted shifts with the shadowing property}\label{Shifts}

\begin{theorem}\label{Characterization3}
Let $X = \ell_p(\Z)$ $(1 \leq p < \infty)$ or $X = c_0(\Z)$.
Let $w = (w_n)_{n \in \Z}$ be a bounded sequence of scalars with
$\inf_{n \in \Z} |w_n| > 0$ and consider the bilateral weighted backward
shift
$$
B_w : (x_n)_{n \in \Z} \in X \mapsto (w_{n+1}x_{n+1})_{n \in \Z} \in X.
$$
Then $B_w$ has the shadowing property if and only if one of the
following conditions holds:
\begin{itemize}
\item [\rm (A)] $\displaystyle \lim_{n \to \infty} \sup_{k \in \Z}
                |w_k w_{k+1} \cdot\ldots\cdot w_{k+n}|^\frac{1}{n} < 1$.
\item [\rm (B)] $\displaystyle \lim_{n \to \infty} \inf_{k \in \Z}
                |w_k w_{k+1} \cdot\ldots\cdot w_{k+n}|^\frac{1}{n} > 1$.
\item [\rm (C)] $\displaystyle \lim_{n \to \infty} \sup_{k \in \N}
                |w_{-k} w_{-k-1} \cdot \ldots \cdot w_{-k-n}|^\frac{1}{n} < 1$
                and
                $\displaystyle \lim_{n \to \infty} \inf_{k \in \N}
                |w_k w_{k+1} \cdot \ldots \cdot w_{k+n}|^\frac{1}{n} > 1$.
\end{itemize}
\end{theorem}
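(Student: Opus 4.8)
The plan is to first reduce the shadowing property to a linear condition on a difference operator, and then to verify that condition against the three cases (A)--(C). For the reduction, observe that if $(x_n)_{n\in\Z}$ is a $\delta$-pseudotrajectory and we set $v_n = x_{n+1}-B_wx_n$, then a point $x$ $\eps$-shadows $(x_n)$ exactly when the sequence $y_n := x_n - B_w^n x$ stays small; a direct computation gives $y_{n+1}-B_wy_n = v_n$ together with $y_n = x_n - B_w^n(x_0-y_0)$. Hence, using the linearity and homogeneity of $B_w$ (which let one take $\delta$ proportional to $\eps$), $B_w$ has the shadowing property if and only if the bounded linear operator $\mathcal{L}$ on $\ell^\infty(\Z,X)$ defined by $(\mathcal{L}y)_n = y_{n+1}-B_wy_n$ is surjective; equivalently, there is a constant $K>0$ such that every bounded sequence $(v_n)_{n\in\Z}$ in $X$ admits a solution $(y_n)$ of $y_{n+1}-B_wy_n=v_n$ with $\sup_n\|y_n\|\le K\sup_n\|v_n\|$, the shadowing point being $x=x_0-y_0$. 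I would record this equivalence as the starting point.

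For the sufficiency direction I would construct such a bounded solution explicitly in each case, working coordinatewise as in the proof of Theorem~\ref{SSShifts}. Writing $v_n=\sum_i v_n(i)e_i$ and seeking $y_n=\sum_i y_n(i)e_i$, the equation decouples along the diagonals $n+i=\mathrm{const}$ into scalar recursions whose homogeneous multipliers are products of consecutive weights. Under (A) one has $r(B_w)<1$, so $y_n=\sum_{j\ge0}B_w^{\,j}v_{n-1-j}$ is the geometrically convergent bounded solution; under (B) one has $r(B_w^{-1})<1$ and $y_n=-\sum_{j\ge0}B_w^{-(j+1)}v_{n+j}$ works. The substantial case is (C): here I would use Lemma~\ref{Unilateral1} to turn the two one-sided conditions into uniform geometric estimates $|w_{-j}\cdots w_{-j-k+1}|\le\beta s^k$ and $1/|w_j\cdots w_{j+k-1}|\le\beta s^k$ ($j,k\in\N$), split $y_n=f_n+g_n$ into its negative-coordinate and positive-coordinate parts, solve the expanding (negative-index) part by summing future errors and the contracting (positive-index) part by summing past errors, and then estimate $\sup_n\|y_n\|$ through a ``diagonal ordering'' of the resulting double series followed by Minkowski's inequality, exactly as in the computation leading to (\ref{ssnh2c}). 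This yields a bounded solution with the required uniform control, so each of (A), (B), (C) implies shadowing.

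For the necessity direction I would argue by contraposition: assuming that none of (A), (B), (C) holds, produce a bounded sequence $(v_n)$ for which $y_{n+1}-B_wy_n=v_n$ has no bounded solution, so that $\mathcal{L}$ fails to be surjective and shadowing fails. Conceptually, (A) and (B) are the two hyperbolic cases ($\sigma(B_w)\subset\D$ and $\sigma(B_w)\subset\C\setminus\ov{\D}$), while (C) is the generalized-hyperbolic splitting in which the closed span of the negative canonical vectors is forward-contracting and the closed span of the nonnegative ones is backward-contracting; the disjunction (A) $\lor$ (B) $\lor$ (C) is precisely the statement that such a (possibly non-invariant) dichotomy exists with uniform exponential rates. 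I would establish a combinatorial dichotomy lemma for the quantities $\sup_k|w_k\cdots w_{k+n}|^{1/n}$ and $\inf_k|w_k\cdots w_{k+n}|^{1/n}$ together with their one-sided versions over $\N$, showing that if all three conditions fail then there exist, at arbitrarily large scales, blocks of weights along which the accumulated products neither decay nor grow uniformly; concentrating a pseudotrajectory on the canonical vectors indexed by such a block then forces the shadowing error to exceed any prescribed bound.

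The main obstacle I anticipate is exactly this necessity step: extracting from the failure of the three limit conditions a single bounded error sequence whose solutions are all unbounded. The difficulty is the interaction between the global suprema and infima over all of $\Z$ (governing (A), (B)) and the one-sided suprema and infima over $\N$ (governing (C)); ruling out every ``mixed'' weight pattern requires a careful case analysis showing that any configuration escaping all three conditions contains, on one of the two half-lines or straddling the origin, arbitrarily long weight blocks whose product profile is incompatible with a uniformly bounded solution operator. Managing the uniformity across the diagonal recursions --- so that a local failure genuinely obstructs a bounded solution in the mixed $\ell^\infty(\Z,X)$ norm, rather than merely along one diagonal --- is where the technical heart of the argument lies.
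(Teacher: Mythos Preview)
Your reduction of shadowing to surjectivity of $\mathcal{L}$ on $\ell^\infty(\Z,X)$ is correct, and your sufficiency argument is essentially the content of \cite[Theorem~A]{BerCirDarMesPuj18} (cf.\ Proposition~\ref{PS}), recast as explicit solution formulas. One slip: under~(C) the \emph{nonpositive}-index part $M$ is contracting ($\sigma(B_w|_M)\subset\D$) and the positive-index part $N$ is expanding; your labels are reversed, though your recipe (past sums on the contracting side, future sums on the expanding side) is right once corrected. Note also that $X=M\oplus N$ is only semi-invariant ($B_w(M)\subset M$ but $B_w(N)\not\subset N$), so you cannot literally solve two independent invariant problems; the diagonal decoupling you mention is what rescues this, and the estimate does go through as in~(\ref{ssnh2c}).

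The genuine gap is the necessity direction, and your own diagnosis is accurate: your plan is vague exactly where the work is. The paper does \emph{not} argue by global contraposition and does not try to classify all weight patterns escaping (A), (B), (C). Instead it proves directly that shadowing forces a one-sided dichotomy on each half-line, then eliminates one of the four combinations. Concretely: if (C1)~$\equiv$~``$\lim\sup_{k\in\N}|w_k\cdots w_{k+n}|^{1/n}<1$'' fails, Lemma~\ref{Unilateral1}(iii) gives $t,m_0$ with $\sum_{n=0}^{m_0}|w_t\cdots w_{t+n}|\ge(1+\delta)/\delta^2$; one then builds, for each $m>m_0$, a $\delta$-pseudotrajectory supported on $e_{t+m},e_{t+m-1},\ldots,e_t$ with phases chosen so that the $(t-1)$th coordinate of $x_{m+1}$ is a sum of absolute values. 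Comparing with the shadowing orbit at that coordinate yields $\sum_{j=1}^{m}|w_{t+m}\cdots w_{t+m-j+1}|^{-1}<1+1/\delta$ uniformly in $m$, and Lemma~\ref{Unilateral1}(iv) converts this into (C2)~$\equiv$~``$\lim\inf_{k\in\N}|w_k\cdots w_{k+n}|^{1/n}>1$''. Thus shadowing $\Rightarrow$ (C1)$\vee$(C2). Conjugating $B_w^{-1}$ to a backward shift via $(x_n)\mapsto(x_{-n})$ transports the same argument to the negative half-line, giving (D1)$\vee$(D2). Three of the four combinations are exactly (A), (B), (C); the remaining case (C1)$\wedge$(D1) is ruled out by a single explicit pseudotrajectory built from $e_0$, whose would-be shadowing point is forced by (C1) and (D1) to have all nonzero coordinates vanish, a contradiction.

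The idea you are missing is this two-step use of Lemma~\ref{Unilateral1}: part~(iii) to locate, from the failure of one one-sided limit, a single index $t$ at which a finite partial sum is large, and part~(iv) to read the uniform bound extracted from shadowing as the \emph{other} one-sided limit. This replaces your anticipated ``careful case analysis of mixed weight patterns'' by two explicit pseudotrajectories and one symmetry.
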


\smallskip
We recall that (A) and (B) are exactly the cases in which the operator
$B_w$ is hyperbolic \cite[Remark~35]{BerCirDarMesPuj18}. In these cases,
$B_w$ does not exhibit chaotic behaviour, since hyperbolic operators are
not Li-Yorke chaotic \cite[Theorem~C]{BerCirDarMesPuj18}. On the other hand,
we will prove after the proof of Theorem~\ref{Characterization3} that
condition (C) implies the frequent hypercyclicity criterion. Hence,
in this case, $B_w$ exhibit several types of chaotic behaviours,
including mixing, Devaney chaos and dense distributional chaos.

\begin{lemma}\label{Unilateral1}
If $(w_n)_{n \in \N}$ is a bounded sequence of scalars, then the following
assertions are equivalent:
\begin{itemize}
\item [\rm   (i)] $\displaystyle \lim_{n \to \infty} \sup_{k \in \N}
      |w_k w_{k+1} \cdot\ldots\cdot w_{k+n}|^\frac{1}{n} < 1$;
\item [\rm  (ii)] $\displaystyle \sup_{k \in \N}
      |w_k w_{k+1} \cdot\ldots\cdot w_{k+n-1}| < 1$ for some $n \in \N$;
\item [\rm (iii)] $\displaystyle \sup_{k \in \N} \sum_{n=0}^\infty
      |w_k w_{k+1} \cdot\ldots\cdot w_{k+n}| < \infty$;
\item [\rm  (iv)] $\displaystyle \sup_{k \in \N} \sum_{n=0}^{k-1}
      |w_k w_{k-1} \cdot\ldots\cdot w_{k-n}| < \infty$.
\end{itemize}
\end{lemma}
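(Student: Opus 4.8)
The plan is to route every implication through the single condition ``$M_n < 1$ for some $n$,'' which is exactly (ii), where I set $M_m = \sup_{k \in \N} |w_k w_{k+1} \cdots w_{k+m-1}|$ (the supremum of the products of $m$ consecutive weights). First I would record the submultiplicativity $M_{m+m'} \le M_m M_{m'}$, which is immediate from factoring a product of $m+m'$ consecutive terms into a block of $m$ and a block of $m'$ and bounding each by the corresponding $M$. Since $(w_n)$ is bounded, each $M_m$ is finite, so Fekete's subadditive lemma applies to $\log M_m$ and gives that $\lim_{m\to\infty} M_m^{1/m} = \inf_m M_m^{1/m} =: L$ exists. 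Because the quantity in (i) is $\lim_n M_{n+1}^{1/n} = \lim_n (M_{n+1}^{1/(n+1)})^{(n+1)/n} = L$, condition (i) says precisely $L < 1$; and since $L$ is the infimum of the $M_m^{1/m}$, we get $L < 1$ if and only if $M_n < 1$ for some $n$, i.e. (ii). This settles (i) $\Leftrightarrow$ (ii).

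For (ii) $\Rightarrow$ (iii) and (ii) $\Rightarrow$ (iv) I would again use only submultiplicativity. If $M_n = \theta < 1$, then writing $m = qn + r$ with $0 \le r < n$ yields $M_m \le \theta^q \max_{0 \le r < n} M_r$, so the $M_m$ decay geometrically and $\sum_{m \ge 1} M_m < \infty$. Both the series in (iii) (products starting at $k$) and the truncated series in (iv) (products of length up to $k$ ending at $k$) then have $m$-th term bounded by $M_m$ uniformly in $k$ — for (iv) the relevant blocks start at indices $\ge 1$, which is admissible — so both suprema are finite.

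The substantive part, and where I expect the real work to be, is the pair of converses (iii) $\Rightarrow$ (ii) and (iv) $\Rightarrow$ (ii): uniform summability of the series does not obviously force a fixed-length product to be uniformly less than $1$. My device here is a recursion. Writing $S(k) = \sum_{m=1}^\infty |w_k \cdots w_{k+m-1}|$ and factoring out the first weight gives $S(k) = |w_k|\,(1 + S(k+1))$, hence $|w_k| = S(k)/(1 + S(k+1))$. Assuming (iii), i.e. $S(k) \le K < \infty$ for all $k$, a telescoping computation then produces
\[
|w_k w_{k+1} \cdots w_{k+n-1}| = \frac{S(k)}{1+S(k+n)} \prod_{i=1}^{n-1} \frac{S(k+i)}{1+S(k+i)} \le K \Big(\frac{K}{1+K}\Big)^{n-1},
\]
using that $t \mapsto t/(1+t)$ is increasing. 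Since $K/(1+K) < 1$, the right-hand side tends to $0$, so $M_n < 1$ for $n$ large, which is (ii). For (iv) $\Rightarrow$ (ii) I would run the mirror-image recursion: with $\tilde S(k) = \sum_{n=0}^{k-1} |w_k w_{k-1} \cdots w_{k-n}|$, the identity $|w_{k-m+1} \cdots w_k| = |w_k|\,|w_{k-m+1}\cdots w_{k-1}|$ gives $\tilde S(k) = |w_k|\,(1 + \tilde S(k-1))$, and the same telescoping bound again forces $M_n < 1$ for large $n$. These two recursions close the cycle and prove the four conditions equivalent; the crux is recognizing that the summability hypotheses encode precisely the geometric recursion for $|w_k|$.
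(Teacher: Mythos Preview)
Your proof is correct and takes essentially the same approach as the paper: the crux in both is the recursion $|w_k| = R_k/(1+R_{k+1})$ (your $S(k)$) and the resulting telescoping bound $|w_k\cdots w_{k+n}| \le K\big(K/(1+K)\big)^n$, together with its mirror version for (iv). Your explicit use of submultiplicativity and Fekete's lemma to establish (i) $\Leftrightarrow$ (ii) is a clean addition, but the paper simply observes that (i) $\Rightarrow$ (ii) is immediate and closes the loop by reading (i) directly off the same telescoping bound.
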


\begin{proof}
It is clear that (i) implies (ii).
The fact that (ii) implies (iii) and (iv) follows easily by comparing
the series in (iii) and (iv) with a suitable geometric series.
Let us show that (iii) implies (i). For each $k \in \N$, let
$R_k = \sum_{n=0}^\infty |w_k w_{k+1} \cdot\ldots\cdot w_{k+n}|$.
By hypothesis, there is a constant $K > 0$ such that $R_k \leq K$
for all $k \in \N$. Since $|w_k| = \frac{R_k}{1 + R_{k+1}}$ ($k \in \N$),
$$
|w_k w_{k+1} \cdot\ldots\cdot w_{k+n}|
  = \frac{R_k}{1 + R_{k+n+1}} \cdot \frac{R_{k+1}}{1 + R_{k+1}}
   \cdot\ldots\cdot \frac{R_{k+n}}{1 + R_{k+n}}
  \leq K \cdot \Big(\frac{K}{1+K}\Big)^n,
$$
where we used the fact that $\frac{R_j}{1+R_j} = 1 - \frac{1}{1+R_j}
\leq \frac{K}{1+K}$ for all $j \in \N$. This implies (i).
The proof that (iv) implies (i) is similar and is left to the reader.
\end{proof}

\smallskip
\noindent
{\it Proof of Theorem \ref{Characterization3}.}
Since invertible hyperbolic operators have the shadowing property,
in order to prove the sufficiency it is enough to show that (C) implies
shadowing. Suppose that (C) holds. Let
$$
M = \big\{(x_n)_{n \in \Z} \in X : x_n = 0 \text{ for all } n > 0\big\}
$$
and
$$
N = \big\{(x_n)_{n \in \Z} \in X : x_n = 0 \text{ for all } n \leq 0\},
$$
which are closed subspaces of $X$ such that $X = M \oplus N$,
$B_w(M) \subset M$ and $B_w^{-1}(N) \subset N$. Since, for each $n \in \N$,
$$
\|(B_w|_M)^n\| =
  \sup_{k \in \N_0} |w_{-k} w_{-k-1} \cdot\ldots\cdot w_{-k-n+1}|
$$
and
$$
\|(B_w^{-1}|_N)^n\| =
  \sup_{k \in \N} |w_{k+1} w_{k+2} \cdot\ldots\cdot w_{k+n}|,
$$
we have that condition (C) is equivalent to say that
$$
\sigma(B_w|_M) \subset \D \ \ \ \text{ and } \ \ \
\sigma(B_w^{-1}|_N) \subset \D,
$$
where $\D$ denotes the open unit disk in the complex plane. Thus, by
\cite[Theorem~A]{BerCirDarMesPuj18}, $B_w$ has the shadowing property.

Conversely, assume that $B_w$ has the shadowing property. We claim that
one of the following conditions must be true:
\begin{itemize}
\item [\rm (C1)] $\displaystyle \lim_{n \to \infty} \sup_{k \in \N}
      |w_k w_{k+1} \cdot\ldots\cdot w_{k+n}|^\frac{1}{n} < 1$.
\item [\rm (C2)] $\displaystyle \lim_{n \to \infty} \inf_{k \in \N}
      |w_k w_{k+1} \cdot\ldots\cdot w_{k+n}|^\frac{1}{n} > 1$.
\end{itemize}
Indeed, assume that condition (C1) is false. We will prove that condition
(C2) must be true. Take $\eps = 1$ and let $\delta > 0$ be associated to
this $\eps$ according to the definition of shadowing. Since (C1) is false,
Lemma~\ref{Unilateral1} guarantees that there exist $t \in \N$ and
$m_0 \in \N$ such that
\begin{equation}
\sum_{n=0}^{m_0} |w_t w_{t+1} \cdot\ldots\cdot w_{t+n}|
  \geq \frac{1+\delta}{\delta^2}\cdot \label{C1}
\end{equation}
Given $m > m_0$, we construct a $\delta$-pseudotrajectory of $B_w$ in the
following way:
$$x_0 = e^{i\theta_0} e_{t+m},\;
 x_k= B_w(x_{k-1}) + \delta e^{i\theta_k} e_{t+m-k},\; k=1,\ldots, m$$
and
$$ 
x_n = B_w(x_{n-1}) \ \text{ for all } n \geq m+1,$$
where the angles $\theta_0,\theta_1,\ldots,\theta_m$ are chosen so that
$$
e^{i\theta_k} w_t w_{t+1} \cdot\ldots\cdot w_{t+m-k}
  = |w_t w_{t+1} \cdot\ldots\cdot w_{t+m-k}| \ \
  \text{ for each } 0 \leq k \leq m.
$$
By shadowing, there exists $a = (a_n)_{n \in \Z} \in X$ such that
\begin{equation}
\|x_n - B_w^n(a)\| < 1 \ \ \text{ for all } n \in \Z. \label{C2}
\end{equation}
Since $B_w e_k = w_k e_{k-1}$ for all $k \in \Z$,
the $(t-1)^\text{th}$ coordinate of $x_{m+1}$ is
$$
e^{i\theta_0} w_t w_{t+1} \cdot\ldots\cdot w_{t+m}
+ \big(e^{i\theta_1} w_t w_{t+1} \cdot\ldots\cdot w_{t+m-1} + \cdots +
       e^{i\theta_{m-1}} w_t w_{t+1} + e^{i\theta_m} w_t\big) \delta,
$$
which is equal to
$$
|w_t w_{t+1} \cdot\ldots\cdot w_{t+m}|
+ \big(|w_t w_{t+1} \cdot\ldots\cdot w_{t+m-1}|
+ \cdots + |w_t w_{t+1}| + |w_t|\big) \delta.
$$
By writing $a_{t+m} = e^{i\theta_0} + \gamma$, we have that $|\gamma| < 1$
and the $(t-1)^\text{th}$ coordinate of $B_w^{m+1}(a)$ is
$$
w_t w_{t+1} \cdot\ldots\cdot w_{t+m} \cdot (e^{i\theta_0} + \gamma).
$$
Hence, (\ref{C2}) gives
\begin{equation}
\big|\big(|w_t w_{t+1} \cdot\ldots\cdot w_{t+m-1}| + \cdots + |w_t w_{t+1}|
 + |w_t|\big) \delta - w_t w_{t+1} \cdot\ldots\cdot w_{t+m}\gamma\big| < 1.
 \label{C3}
\end{equation}
By (\ref{C1}) and (\ref{C3}),
$$
|w_t w_{t+1} \cdot\ldots\cdot w_{t+m}| > 1/\delta.
$$
Again by (\ref{C3}),
$$
\big(|w_t w_{t+1} \cdot\ldots\cdot w_{t+m-1}| + \cdots + |w_t w_{t+1}|
 + |w_t|\big) \delta < 1 + |w_t w_{t+1} \cdot\ldots\cdot w_{t+m}| |\gamma|.
$$
By dividing both sides by $|w_t w_{t+1} \cdot\ldots\cdot w_{t+m}|\delta$,
we obtain
$$
\frac{1}{|w_{t+m}|} + \frac{1}{|w_{t+m}w_{t+m-1}|} + \cdots +
\frac{1}{|w_{t+m}w_{t+m-1} \cdot\ldots\cdot w_{t+1}|} < 1 + \frac{1}{\delta}
\cdot
$$
Since this is true for every $m > m_0$, it follows that
$\inf_{n \in \N} |w_n| > 0$ and, by Lemma~\ref{Unilateral1},
condition (C2) holds.

Now, the inverse of $B_w$ is the bilateral weighted forward shift
$$
F_{w'} : (x_n)_{n \in \Z} \in X \mapsto (w'_{n-1}x_{n-1})_{n \in \Z} \in X,
$$
where $w'_n = 1/w_{n+1}$ for each $n \in \Z$. On the other hand, the
isometric isomorphism
$\phi : (x_n)_{n \in \Z} \in X \mapsto (x_{-n})_{n \in \Z} \in X$
establishes a conjugation between $F_{w'}$ and $B_{w''}$ (that is,
$\phi \circ F_{w'} = B_{w''} \circ \phi$), where
$$
w''_n = w'_{-n} = \frac{1}{w_{-n+1}} \ \ \text{ for each } n \in \Z.
$$
Hence, $B_{w''}$ also has the shadowing property and, by the above
reasoning, we conclude that one of the following properties must be true:
\begin{itemize}
\item [\rm (D1)] $\displaystyle \lim_{n \to \infty} \inf_{k \in \N}
      |w_{-k} w_{-k-1} \cdot\ldots\cdot w_{-k-n}|^\frac{1}{n} > 1$.
\item [\rm (D2)] $\displaystyle \lim_{n \to \infty} \sup_{k \in \N}
      |w_{-k} w_{-k-1} \cdot\ldots\cdot w_{-k-n}|^\frac{1}{n} < 1$.
\end{itemize}
Thus, we have four possibilities. Note that condition (A) is equivalent
to say that ``(C1) and (D2) are true'', condition (B) is equivalent to
say that ``(C2) and (D1) are true'', and condition (C) means that
``(C2) and (D2) are true''. Consequently, it remains to show that
(C1) and (D1) both true is not possible. Indeed, assume that both
(C1) and (D1) hold. Let $\delta$ and $\eps$ be as above. We construct a
$\delta$-pseudotrajectory of $B_w$ as follows:
$$
y_0 = e_0, \ \ y_n = B_w(y_{n-1}) + \delta e_0 \text{ for } n \geq 1, \ \
y_n = B_w^{-1}(y_{n+1} + \delta e_0) \text{ for } n \leq -1.
$$
Note that
$$
y_n = w_0 \cdots w_{-n+1} e_{-n} + \delta w_0 \cdots w_{-n+2} e_{-n+1}
      + \delta w_0 \cdots w_{-n+3} e_{-n+2} + \cdots + \delta w_0 e_{-1}
      + \delta e_0
$$
and
$$
y_{-n} = \frac{\delta + 1}{w_1 \cdots w_n}\, e_n
       + \frac{\delta}{w_1 \cdots w_{n-1}}\, e_{n-1}
       + \frac{\delta}{w_1 \cdots w_{n-2}}\, e_{n-2}
       + \cdots + \frac{\delta}{w_1}\, e_1,
$$
for each $n \in \N$. There exists $b = (b_n)_{n \in \Z} \in X$ such that
\begin{equation}
\|y_n - B_w^n(b)\| < 1 \ \ \text{ for every } n \in \Z. \label{C4}
\end{equation}
For each $k \geq 1$ and each $n \geq 0$, since the $(-k-(n+1))^\text{th}$
coordinate of $B_w^{n+1}(b)$ is equal to
$w_{-k-n} \cdot\ldots\cdot w_{-k-1}w_{-k} b_{-k}$,
(\ref{C4}) implies that
$$
|w_{-k-n} \cdot\ldots\cdot w_{-k-1}w_{-k} b_{-k}| < 1.
$$
Hence, (D1) gives $b_{-k} = 0$ for all $k \geq 1$. Analogously, (C1) gives
$b_k = 0$ for all $k \geq 1$. Thus, $b = b_0 e_0$, which contradicts
(\ref{C4}). This completes the proof of Theorem~\ref{Characterization3}.
\hfill $\Box$

\vspace{1em}

Let us now show that condition (C) implies the frequent hypercyclicity
criterion. Recall that an operator $T$ on a separable Banach space
$Y$ is said to satisfy the {\em frequent hypercyclicity criterion} if
there exist a dense subset $Y_0$ of $Y$ and a map $S : Y_0 \to Y_0$ such that
the following properties hold for every $y \in Y_0$:
\begin{itemize}
\item [\rm   (i)] $\sum_{k=0}^\infty T^ky$ converges unconditionally,
\item [\rm  (ii)] $\sum_{k=0}^\infty S^ky$ converges unconditionally,
\item [\rm (iii)] $TSy = y$.
\end{itemize}
In our case, $Y = X$ and $T = B_w$. We define $Y_0$ as being the set of all
sequences $y = (y_n)_{n \in \Z} \in Y$ that have finite support and $S$ as
being the inverse $B_w^{-1}$ of $B_w$, which is equal to $F_{w'}$, where
$w' = (1/w_{n+1})_{n \in \Z}$. Clearly $Y_0$ is dense in $Y$ and (iii) holds.
In order to prove (i), fix $y = (y_n)_{n \in \Z} \in Y_0$ and let $r \in \N$
be such that $y_n = 0$ whenever $|n| > r$. Since
$$
T^ky = \big(w_{n+1} \cdot\ldots\cdot w_{n+k} y_{n+k}\big)_{n \in \Z},
$$
we have that
\begin{align*}
\sum_{k=1}^\infty \|T^ky\|
&\leq \sum_{k=1}^\infty \sum_{n = -r-k}^{r-k}
  |w_{n+1} \cdot\ldots\cdot w_{n+k}| |y_{n+k}|\\
&= \sum_{m = -r}^r \Bigg(\sum_{k=1}^\infty
  |w_{m-k+1} \cdot\ldots\cdot w_m|\Bigg) |y_m| < \infty,
\end{align*}
because of Lemma~\ref{Unilateral1}. In a similar way,
$\displaystyle \sum_{k=1}^\infty \|S^ky\| < \infty$.

\medskip
We have the following sufficient condition for positive shadowing, which
was inspired by \cite[Theorem~A]{BerCirDarMesPuj18}.

\begin{proposition}\label{PS}
Let $T$ be an operator on a Banach space $X$. Suppose that $X = M \oplus N$,
where $M$ and $N$ are closed subspaces of $X$ with the following properties:
\begin{itemize}
\item $T(M) \subset M$ and $\sigma(T|_M) \subset \D$;
\item $T|_N : N \to T(N)$ is bijective, $T(N)$ is closed and
      contains $N$, and $\sigma((T|_N)^{-1}|_N) \subset \D$.
\end{itemize}
Then $T$ has the positive shadowing property.
\end{proposition}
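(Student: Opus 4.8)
The plan is to reduce the problem to controlling the \emph{error recursion} attached to a pseudotrajectory and then to treat the contracting behaviour on $M$ and the expanding behaviour on $N$ separately. Let $P : X \to M$ and $Q : X \to N$ be the bounded projections associated with $X = M \oplus N$. Fix $\eps > 0$, and let $(x_n)_{n \in \N_0}$ be a $\delta$-pseudotrajectory of $T$; write $\xi_n = x_{n+1} - Tx_n$, so that $\|\xi_n\| \leq \delta$. For a candidate shadowing point $x$, put $e_n = x_n - T^nx$; then $e_0 = x_0 - x$ is at our disposal and $e_{n+1} = Te_n + \xi_n$. Applying $P$ and $Q$ and using $T(M) \subset M$ (which gives $QTP = 0$ and $PTP = TP$), this single recursion splits into a self-contained unstable part $Qe_{n+1} = A\,Qe_n + Q\xi_n$, where $A := (QT)|_N : N \to N$ is bounded, and a stable part $Pe_{n+1} = (T|_M)\,Pe_n + PT\,Qe_n + P\xi_n$. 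The goal is to choose $Qe_0 \in N$ and $Pe_0 \in M$ so that $\|e_n\|$ stays below $\eps$ for all $n \in \N_0$.

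First I would solve the unstable recursion. Since $T|_N : N \to T(N)$ is a continuous bijection between Banach spaces ($N$ and $T(N)$ being closed), the open mapping theorem makes $R := (T|_N)^{-1}|_N : N \to N$ a bounded operator with $TR = I_N$, whence $AR = QTR = I_N$ on $N$. Because $\sigma(R) \subset \D$, the spectral radius formula yields $C \geq 1$ and $\rho \in (0,1)$ with $\|R^k\| \leq C\rho^k$. I would then set $v_n := -\sum_{k=0}^{\infty} R^{k+1} Q\xi_{n+k}$, which converges with $\|v_n\| \leq \frac{C\rho}{1-\rho}\,\|Q\|\,\delta$, and verify, using boundedness of $A$ together with $AR = I_N$, that $v_{n+1} = A v_n + Q\xi_n$. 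Taking $Qe_0 := v_0$ then forces $Qe_n = v_n$ for every $n$, so the $N$-component of the error is controlled of order $\delta$.

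With $Qe_n = v_n$ now a known forcing term of order $\delta$, the stable recursion becomes $Pe_{n+1} = (T|_M)\,Pe_n + g_n$ with $g_n := PT v_n + P\xi_n \in M$ and $\|g_n\|$ of order $\delta$. Since $\sigma(T|_M) \subset \D$, we again have $\|(T|_M)^j\| \leq C'\rho'^{\,j}$ for some $C' \geq 1$ and $\rho' \in (0,1)$. Choosing $Pe_0 := 0$ gives $Pe_n = \sum_{j=0}^{n-1} (T|_M)^{n-1-j} g_j$, so $\|Pe_n\| \leq \frac{C'}{1-\rho'}\sup_j \|g_j\|$, once more of order $\delta$. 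Combining, $\|e_n\| \leq \|Pe_n\| + \|Qe_n\|$ is bounded by a fixed multiple of $\delta$ uniformly in $n$; choosing $\delta$ small enough makes this strictly less than $\eps$. The resulting shadowing point is $x = x_0 - e_0 = x_0 + \sum_{k=0}^{\infty} R^{k+1} Q\xi_k$.

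The main obstacle is the coupling between the two components: because $T(N)$ is strictly larger than $N$, the operator $T$ does not preserve $N$, and the stable recursion inherits the cross term $PT\,Qe_n$. The resolution lies in the order of solution. The expanding recursion on $N$ is handled first, via the right-inverse series, which is summable precisely because $R$ is a contraction ($\sigma(R)\subset\D$); once $Qe_n$ is pinned down and small, the cross term is merely a bounded forcing term that the contraction $T|_M$ absorbs. The remaining points — convergence of the series defining $v_n$, the term-by-term verification that it solves the unstable recursion, and the elementary geometric estimates on $M$ — are routine and I would check them without belabouring the computation.
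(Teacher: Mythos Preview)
Your proof is correct and follows essentially the same strategy as the paper's: solve the $N$-component of the error via the convergent series in the right inverse $R$ (the paper's $S$), then handle the $M$-component by forward iteration of the contraction $T|_M$, arriving at the same shadowing point $x_0+\sum_{k\ge 0}R^{k+1}Q\xi_k$. One small remark: the cross term $PTv_n$ you carry along in fact vanishes (each $v_n$ lies in $R(N)$, so $Tv_n\in N$ and $PTv_n=0$), which is why the paper can verify $Ty_n+z_n=y_{n+1}$ directly without ever splitting into projected recursions.
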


\begin{proof}
Let $S$ be the operator $(T|_N)^{-1}|_N$ on $N$.
For each $x \in X$, we can write $x = x^{(1)} + x^{(2)}$ with unique
$x^{(1)} \in M$ and $x^{(2)} \in N$. Moreover, there exists $\beta > 0$
such that $\|x^{(1)}\| \leq \beta \|x\|$ and $\|x^{(2)}\| \leq \beta \|x\|$
for all $x \in X$. Since $r(T|_M) < 1$ and $r(S) < 1$, there exist
$0 < t < 1$ and $C \geq 1$ such that $\|T^ny\| \leq C\, t^n \|y\|$ and
$\|S^nz\| \leq C\, t^n \|z\|$ whenever $n \in \N_0$, $y \in M$ and $z \in N$.
By linearity, it is enough to consider $\eps = 1$ in the definition of
positive shadowing. Put $\delta = \frac{1-t}{4C\beta}$ and let
$(x_n)_{n \in \N_0}$ be a $\delta$-pseudotrajectory of $T$.
For each $n \in \N_0$, let $z_n = x_{n+1} - Tx_n$ and let
$$
y^{(1)}_n = \sum_{k=0}^{n-1} T^kz^{(1)}_{n-k-1} \in M, \ \
y^{(2)}_n = - \sum_{k=1}^\infty S^kz^{(2)}_{n+k-1} \in N
\ \text{ and } \ y_n = y^{(1)}_n + y^{(2)}_n \in X,
$$
where $y^{(1)}_0 = 0$. Then,
$$
Ty_0 + z_0 = - \sum_{k=1}^\infty S^{k-1}z^{(2)}_{k-1} + z^{(1)}_0 + z^{(2)}_0
           = z^{(1)}_0 - \sum_{k=1}^\infty S^kz^{(2)}_k = y_1
$$
and, for every $n \geq 1$,
\begin{align*}
Ty_n + z_n &= \sum_{k=0}^{n-1} T^{k+1}z^{(1)}_{n-k-1}
            - \sum_{k=1}^\infty S^{k-1}z^{(2)}_{n+k-1}
            + z^{(1)}_n + z^{(2)}_n\\
           &= \sum_{k=0}^n T^kz^{(1)}_{n-k}
            - \sum_{k=1}^\infty S^kz^{(2)}_{n+k} = y_{n+1}.
\end{align*}
Therefore, $x_{n+1} - y_{n+1} = T(x_n - y_n)$, and so
$x_n - y_n = T^n(x_0 - y_0)$ for all $n \in \N_0$. Since
$$
\|y^{(1)}_n\| \leq C \sum_{k=0}^{n-1} t^k \|z^{(1)}_{n-k-1}\|
\ \ \text{ and } \ \
\|y^{(2)}_n\| \leq C \sum_{k=1}^\infty t^k \|z^{(2)}_{n+k-1}\|,
$$
we conclude that
$$
\|x_n - T^n(x_0-y_0)\| = \|y_n\| \leq \frac{2 C \beta \delta}{1-t}
 = \frac{1}{2} < \eps \ \text{ for all } n \in \N_0.
$$
This proves that $T$ has the positive shadowing property.
\end{proof}

In the next two propositions we will characterize positive shadowing for
unilateral (backward and forward) weighted shifts.

\begin{proposition}\label{Characterization1}
Let $X = \ell_p(\N)$ $(1 \leq p < \infty)$ or $X = c_0(\N)$.
Let $w = (w_n)_{n \in \N}$ be a bounded sequence of nonzero scalars
and consider the unilateral weighted backward shift
$$
B_w : (x_1,x_2,\ldots) \in X \mapsto (w_2x_2,w_3x_3,\ldots) \in X.
$$
Then $B_w$ has the positive shadowing property if and only if one of the
following conditions holds:
\begin{itemize}
\item [\rm (a)] $\displaystyle \lim_{n \to \infty} \sup_{k \in \N}
      |w_k w_{k+1} \cdot\ldots\cdot w_{k+n}|^\frac{1}{n} < 1$.
\item [\rm (b)] $\displaystyle \lim_{n \to \infty} \inf_{k \in \N}
      |w_k w_{k+1} \cdot\ldots\cdot w_{k+n}|^\frac{1}{n} > 1$.
\end{itemize}
\end{proposition}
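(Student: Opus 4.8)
The plan is to handle the two sufficient conditions separately, in each case fitting $B_w$ into the abstract criterion of Proposition~\ref{PS}, and then to recover necessity by recycling the pseudotrajectory construction already performed in the proof of Theorem~\ref{Characterization3}. Throughout I will use that $(B_w^n x)_k = w_{k+1}\cdots w_{k+n}\,x_{k+n}$, so that $\|B_w^n\| = \sup_{k \in \N}|w_k \cdots w_{k+n-1}|$.

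First I would dispose of the sufficiency of~(a). By the equivalence (i)$\Leftrightarrow$(ii) of Lemma~\ref{Unilateral1}, condition~(a) is the same as $\|B_w^{n_0}\| < 1$ for some $n_0 \in \N$, whence $r(B_w) < 1$ and $\sigma(B_w) \subset \D$ by the spectral radius formula. Applying Proposition~\ref{PS} with $M = X$ and $N = \{0\}$ (the second bullet being vacuous) then yields the positive shadowing property at once.

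The more delicate case is the sufficiency of~(b), where $B_w$ is neither invertible (its kernel is $\spa\{e_1\}$) nor expanding in any naive sense, since every finitely supported vector is eventually annihilated. The key observation is that one can nonetheless invoke Proposition~\ref{PS} by splitting off this one-dimensional defect: take $M = \spa\{e_1\}$ and $N = \{x \in X : x_1 = 0\}$, so that $X = M \oplus N$. Here $B_w(M) = \{0\} \subset M$ with $\sigma(B_w|_M) = \{0\} \subset \D$, while $B_w|_N : N \to B_w(N) = X$ is a bijection onto the closed subspace $X \supset N$; its inverse restricted to $N$ is a weighted forward shift built from the reciprocal weights $1/w_{k+1}$, whose powers have norm governed by the products $1/|w_k \cdots w_{k+n}|$. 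A direct computation of these norms, together with the spectral radius formula (equivalently, Lemma~\ref{Unilateral1} applied to the reciprocal weights), shows that condition~(b) is precisely what forces $\sigma\big((B_w|_N)^{-1}|_N\big) \subset \D$. Proposition~\ref{PS} then gives positive shadowing. I expect recognizing this decomposition to be the main obstacle: one has to see that the non-injectivity of $B_w$ is exactly the harmless stable block, and that the genuine dynamics on $N$ is the contraction $(B_w|_N)^{-1}$.

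For necessity, I would assume positive shadowing and show that the failure of~(a) forces~(b). This is already present, in disguise, inside the proof of Theorem~\ref{Characterization3}: the sub-argument proving ``(C1) false $\Rightarrow$ (C2)'' builds a $\delta$-pseudotrajectory starting at $e^{i\theta_0}e_{t+m}$, evolves it forward, and inspects the $(t-1)^{\text{th}}$ coordinate at times $n \geq 0$, using only nonnegative times and canonical vectors $e_t,\ldots,e_{t+m}$ with strictly positive indices. Consequently that computation transfers verbatim to $\ell_p(\N)$ and $c_0(\N)$, uses only the positive shadowing property, and yields $\inf_{n}|w_n| > 0$ together with condition~(b) via Lemma~\ref{Unilateral1}. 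On a one-sided space there is a single ``side,'' so neither the forward-shift conjugation nor the elimination of a mixed case from the bilateral argument is needed; the dichotomy ``(a) or (b)'' is all that remains, completing the proof.
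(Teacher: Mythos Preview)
Your proposal is correct and follows essentially the same route as the paper: the same decomposition $M = \spa\{e_1\}$, $N = \{x : x_1 = 0\}$ for case~(b) via Proposition~\ref{PS}, and the same reuse of the ``(C1) false $\Rightarrow$ (C2)'' pseudotrajectory argument from Theorem~\ref{Characterization3} for the converse. The only cosmetic difference is that for case~(a) the paper cites the external fact that hyperbolic operators have positive shadowing, whereas you obtain it directly from Proposition~\ref{PS} with the degenerate splitting $M = X$, $N = \{0\}$; both are valid.
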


We observe that condition (a) means that $B_w$ is hyperbolic
\cite[Remark~35]{BerCirDarMesPuj18} and that condition (b) implies the
{\em frequent hypercyclicity criterion}.

\begin{proof}
If condition (b) holds, then we apply Proposition \ref{PS} with
$$
M = \{(x_1,0,0,\ldots) : x_1 \in \K\} \ \ \text{ and } \ \
N = \{(0,x_2,x_3,\ldots) : (x_2,x_3,\ldots) \in X\}.
$$
Clearly, $B_w(M) = \{0\} \subset M$ and $\sigma(B_w|_M) = \{0\} \subset \D$.
Condition (b) implies that $\inf_{n \in \N} |w_n| > 0$, and so
$T|_N : N \to T(N)$ is bijective. Let $S = (T|_N)^{-1}|_N$. Since
$$
S\big((0,x_2,x_3,\ldots)\big)
  = \Big(0,0,\frac{x_2}{w_3},\frac{x_3}{w_4},\ldots\Big),
$$
we have that
$$
r(S) = \lim_{n \to \infty} \sup_{k \geq 3}
       |w_k \cdot \ldots \cdot w_{k+n-1}|^{-\frac{1}{n}}.
$$
Thus, condition (b) is equivalent to say that $\sigma(S) \subset \D$.
On the other hand, condition (a) means that $B_w$ is hyperbolic
\cite[Remark~35]{BerCirDarMesPuj18} and every hyperbolic operator has the
positive shadowing property \cite[Theorem~13]{BerCirDarMesPuj18}.

For the converse, we assume that $B_w$ has the positive shadowing property
and argue as in the second paragraph of the proof of
Theorem~\ref{Characterization3} to conclude that either (a) or (b) holds.
\end{proof}

\begin{proposition}\label{Characterization2}
Let $X = \ell_p(\N)$ $(1 \leq p < \infty)$ or $X = c_0(\N)$.
Let $w = (w_n)_{n \in \N}$ be a bounded sequence of nonzero scalars
and consider the unilateral weighted forward shift
$$
F_w : (x_1,x_2,\ldots) \in X \mapsto (0,w_1x_1,w_2x_2,\ldots) \in X.
$$
Then $F_w$ has the positive shadowing property if and only if it is
hyperbolic, that is,
$$
\lim_{n \to \infty} \sup_{k \in \N}
  |w_k w_{k+1} \cdot\ldots\cdot w_{k+n}|^\frac{1}{n} < 1.
$$
\end{proposition}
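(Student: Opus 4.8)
The plan is to prove the two implications separately: the sufficiency reduces to hyperbolicity plus a known result, while the necessity is carried out by exhibiting, for each index, a single pseudotrajectory that positive shadowing cannot account for.

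For the sufficiency I would first record that the stated condition is nothing but $r(F_w)<1$. Indeed $F_w^n e_k = w_k\cdots w_{k+n-1}e_{k+n}$, so $\|F_w^n\| = \sup_{k}|w_k\cdots w_{k+n-1}|$, and the spectral radius formula identifies $\lim_n \sup_k |w_k\cdots w_{k+n}|^{1/n}<1$ with $r(F_w)<1$, that is $\sigma(F_w)\subset\D$. Hence $F_w$ is hyperbolic, and every hyperbolic operator has the positive shadowing property by \cite[Theorem~13]{BerCirDarMesPuj18}; alternatively one applies Proposition~\ref{PS} with $M=X$ and $N=\{0\}$. This is the same pattern as the sufficiency in Proposition~\ref{Characterization1}, only simpler, since here no expanding branch occurs.

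For the necessity, assume $F_w$ has the positive shadowing property; I want to derive condition (iv) of Lemma~\ref{Unilateral1}, which is equivalent to~(a). Fix $\eps=1$ and let $\delta>0$ be the associated shadowing constant. For each $c\in\N$ I would build one $\delta$-pseudotrajectory $(x_n)_{n\in\N_0}$ that steers a large, explicit mass onto coordinate $c$ exactly at time $c$: set $x_0=0$, let $x_n = F_w(x_{n-1}) + \delta\lambda_n e_n$ for $1\le n\le c$ (with unimodular scalars $\lambda_n$ chosen so that $\lambda_n w_n\cdots w_{c-1}=|w_n\cdots w_{c-1}|$), and $x_n=F_w(x_{n-1})$ for $n>c$. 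Since the perturbation injected at time $n$ sits at coordinate $n$ and is pushed up to coordinate $c$ by the remaining $c-n$ applications of $F_w$, one gets $x_c = \big(\delta\sum_{n=1}^{c}|w_n\cdots w_{c-1}|\big)e_c$, a single-coordinate vector; the consecutive defects are exactly the injected vectors, so this is a genuine $\delta$-pseudotrajectory.

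The decisive point is that the true orbit cannot see coordinate $c$ at time $c$: because $F_w^c a$ is supported on coordinates $\ge c+1$, its $c$-th coordinate vanishes. Thus, if $a$ shadows $(x_n)$, then $\delta\sum_{n=1}^{c}|w_n\cdots w_{c-1}| = |(x_c)_c - (F_w^c a)_c| \le \|x_c - F_w^c a\| < 1$, giving $\sum_{n=1}^{c}|w_n\cdots w_{c-1}|<1/\delta$ for every $c$; this uses only one coordinate, so $\ell_p(\N)$ and $c_0(\N)$ are handled at once. Reindexing by $k=c-1$ turns this sum into $1+\sum_{n=0}^{k-1}|w_k w_{k-1}\cdots w_{k-n}|$, whence $\sup_k\sum_{n=0}^{k-1}|w_k w_{k-1}\cdots w_{k-n}|\le 1/\delta-1<\infty$, which is precisely condition~(iv); Lemma~\ref{Unilateral1} then yields~(a). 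I expect the main obstacle to be finding this pseudotrajectory: one must exploit the non-invertibility of $F_w$ to pile all the injected mass onto a coordinate that the genuine orbit is forced to kill, and then recognize the resulting sum of backward products as exactly condition~(iv). Once that is in place, the estimates are routine, and—unlike the bilateral case of Theorem~\ref{Characterization3}—no dichotomy is needed, since a forward shift is never expanding.
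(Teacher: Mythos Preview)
Your proof is correct and follows essentially the same approach as the paper: both construct the pseudotrajectory $x_0=0$, $x_n=F_w(x_{n-1})+\delta\lambda_n e_n$ and exploit that $F_w^c a$ vanishes on the first $c$ coordinates, so the accumulated mass cannot be matched by any true orbit. The only cosmetic difference is that the paper argues by contrapositive (assume not hyperbolic, pick one bad index $t$, show the resulting pseudotrajectory is not $1$-shadowed), whereas you argue directly (assume shadowing, run the construction for every $c$, and read off condition~(iv) of Lemma~\ref{Unilateral1}).
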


\begin{proof}
Suppose that $F_w$ is not hyperbolic and fix $\delta > 0$.
By Lemma~\ref{Unilateral1}, there exists $t \in \N$ such that
$$
|w_t| + |w_t w_{t-1}| + \cdots +
|w_t w_{t-1} \cdot\ldots\cdot w_1| \geq 1/\delta.
$$
We construct a $\delta$-pseudotrajectory of $F_w$ in the following way:
$
x_0 = 0$
and
$$x_k= F_w(x_{k-1}) + \delta e^{i\theta_k} e_k,\; 1 \leq k \leq t,\;
x_n = F_w(x_{n-1}) \ \text{ for all } n \geq t+1,$$
where the angles $\theta_1,\ldots,\theta_t$ are chosen so that
the $(t+1)^\text{th}$ coordinate of $x_{t + 1}$ is equal to
$$
\big(|w_1 \cdot\ldots\cdot w_t| + |w_2 \cdot\ldots\cdot w_t|
  + \cdots + |w_{t-1} w_t| + |w_t|\big) \delta.
$$
Since this number is $\geq 1$, $(x_n)_{n \in \N_0}$ cannot be $1$-shadowed
by a real trajectory of $F_w$.
\end{proof}

%%%%%%%%%%%%%%%%%%%%%%%%%%%%%%%%%%%%%%%%%%%%%%%%%%%%%%%%%%%%%%%%%%%%%%%%%%%%%

\section*{Appendix}

\begin{theorem}
Every invertible hyperbolic operator $T$ on a Banach space $X$ is
structurally stable relative to $GL(X)$.
\end{theorem}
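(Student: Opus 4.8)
The plan is to reduce the statement, through two conjugacies, to a single core fact—that two linear strict contractions close to one another are topologically conjugate—and then to prove that fact by a fundamental-domain construction. Throughout, recall that $E=W^{b,s}(0,T)$ and $F=W^{b,u}(0,T)$ give the hyperbolic splitting $X=E\oplus F$ with $r(T|_E)<1$ and $r((T|_F)^{-1})<1$. First I would record that hyperbolicity is open and that the splitting varies continuously. Let $P_T=\frac{1}{2\pi i}\oint_{\T}(\zeta I-T)^{-1}\,d\zeta$ be the Riesz projection onto the part of $\sigma(T)$ inside $\D$; then $P_TX=E$ and $\ker P_T=F$. Since $\zeta\mapsto(\zeta I-S)^{-1}$ is norm-continuous in $S$, uniformly on the compact contour $\T$, the projection $P_S$ is defined and $\|P_S-P_T\|\to0$ as $\|S-T\|\to0$. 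Hence, for $\|S-T\|\le\eps$ small, $S$ is hyperbolic and $\|P_S-P_T\|$ is as small as we wish.

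Second I would use this to conjugate $S$, \emph{linearly}, to an operator sharing the splitting of $T$. For $\|P_S-P_T\|$ small the operator $L=P_TP_S+(I-P_T)(I-P_S)$ is invertible (it reduces to $I$ when $P_S=P_T$ and depends continuously on $P_S$) and satisfies $LP_S=P_TL$, so $L$ maps $P_SX$ onto $E$ and $\ker P_S$ onto $F$. Thus $\hat S=LSL^{-1}$ leaves both $E$ and $F$ invariant and is close to $T$; moreover $\hat S|_E$ is similar to $S|_{P_SX}$, so $r(\hat S|_E)<1$, and likewise $r((\hat S|_F)^{-1})<1$. As a linear isomorphism is a homeomorphism, it suffices to conjugate $T$ to $\hat S$. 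Now $T=A\oplus C$ and $\hat S=A'\oplus C'$ are block-diagonal for $X=E\oplus F$, with $r(A),r(A')<1$, $r(C^{-1}),r(C'^{-1})<1$, and $\|A-A'\|,\|C-C'\|$ small. Conjugacies on $E$ (between $A,A'$) and on $F$ (between $C,C'$) assemble into $h=h_E\oplus h_F$, and passing to inverses turns the unstable block into a stable one. So everything reduces to the claim: \emph{if $A,A'$ are invertible operators on a Banach space $Y$ with $r(A),r(A')<1$ and $\|A-A'\|$ small, then $A$ and $A'$ are topologically conjugate by a homeomorphism fixing $0$.}

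The main obstacle is this claim, and here a bounded perturbation of the identity cannot work: if $h=I+\varphi$ with $\varphi$ bounded and $hA=A'h$, then dividing $hAx=A'hx$ by $\|x\|$ and letting $\|x\|\to\infty$ forces $A=A'$ by homogeneity, so the fixed-point method used in the body of the paper does not apply and one must argue geometrically. Fix $\theta$ with $r(A)<\theta<1$ and put $\|y\|_*=\sum_{n\ge0}\theta^{-n}\|A^ny\|$; this is an equivalent norm with $\|A\|_*\le\theta<1$, and, by equivalence of norms, $\|A'\|_*=:\theta'<1$ as well once $\|A-A'\|$ is small. Hence $A$ and $A'$ are strict contractions for $\|\cdot\|_*$, the origin is a global attractor of each, and $k\mapsto\|A^ky\|_*$ is strictly decreasing from $+\infty$ to $0$ along every nonzero orbit. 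Consequently $\mathcal F=\{y:\|y\|_*\le1<\|A^{-1}y\|_*\}$ is a fundamental domain for the $\Z$-action generated by $A$ (each $y\neq0$ equals $A^mw$ for a unique $m\in\Z$ and $w\in\mathcal F$), and $\mathcal F$ is bounded and bounded away from the origin.

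I would build $h$ by first choosing a homeomorphism of $\mathcal F$ onto the analogous fundamental domain $\mathcal F'$ of $A'$ that is compatible with the identifications $y\sim Ay$ and $h(y)\sim A'h(y)$ gluing the two boundary faces—concretely, taking $h$ to be the identity on $\{\|y\|_*=1\}$, which forces $h=A'A^{-1}$ on the opposite face $A\{\|y\|_*=1\}$, and interpolating across the shell—and then extending equivariantly by $h(A^my)=(A')^mh(y)$, with $h(0)=0$. The genuinely delicate point, and the place where the closeness of $A'$ to $A$ is essential, is exactly this boundary-compatible interpolation across the fundamental shell. Granting it, well-definedness is immediate from the imposed compatibility, and continuity at $0$ follows from the contraction estimates: since $\mathcal F$ is bounded away from $0$ and $A^{-1}$ expands, $\|y\|_*\to0$ forces $m\to+\infty$ (where $y=A^mw$, $w\in\mathcal F$), whence $\|h(y)\|_*=\|(A')^mh(w)\|_*\le(\theta')^m\|h(w)\|_*\to0$; applying the same reasoning to $A^{-1}$ and $(A')^{-1}$ gives continuity of $h^{-1}$. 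This produces the required homeomorphism and, back-tracking through the two reductions above, shows that $S$ is topologically conjugate to $T$ for all $S\in GL(X)$ with $\|S-T\|\le\eps$.
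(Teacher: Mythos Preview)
Your reduction via Riesz projections and the linear conjugation $L$ to the block-diagonal case is correct, and your observation that a bounded perturbation of the identity cannot conjugate two distinct linear maps is apt. But the argument stops precisely at the point you yourself flag: the ``boundary-compatible interpolation across the fundamental shell.'' You write ``granting it,'' yet this is the whole content of the claim that two nearby contractions are conjugate. Concretely, you need a homeomorphism $h_0$ from the closed shell $\{y:\|y\|_*\le1\le\|A^{-1}y\|_*\}$ onto the corresponding shell for $A'$, restricting to the identity on the unit sphere and to $A'A^{-1}$ on its $A$-image. This can be done---radially parametrize each shell over $S_1\times[0,1]$ and isotope the identity to $z\mapsto A'A^{-1}z/\|A'A^{-1}z\|_*$ along the path $(1-t)I+tA'A^{-1}$, which is invertible for every $t\in[0,1]$ once $\|A'A^{-1}-I\|<1$---but it must actually be written out and checked (in particular, that the resulting $h_0$ is a bijection onto the $A'$-shell, not merely a continuous map with the correct boundary values, and that the glued extension is continuous across every seam, not only at $0$). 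Until that is supplied, what you have is a plan rather than a proof.

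The paper's route is genuinely different and sidesteps this construction. It invokes the already-established (nonlinear) structural stability of $T$: cutting off $S-T$ outside the unit ball via Lemma~\ref{Extension} yields a bounded Lipschitz $\varphi$ with $T+\varphi=S$ near $0$, and Hartman's theorem then supplies a homeomorphism $f$ with $f\circ T=(T+\varphi)\circ f$, i.e.\ a \emph{local} conjugacy between $T$ and $S$ near the origin. The global conjugacy on the stable subspace is $h_s(x)=S^{-n}f(T^nx)$ for any $n$ large enough that $T^nx$ lies in the region where $f$ intertwines $T$ and $S$, and dually on the unstable side; finally $h(x_s+x_u)=h_s(x_s)+h_u(x_u)$. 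This ``iterate into the good neighbourhood and transport back'' is morally the same mechanism as your equivariant extension from a fundamental domain, but the local conjugacy $f$ stands in for your missing shell interpolation, and its existence comes for free from the nonlinear theorem. Your approach, once the gap is filled, has the virtue of being self-contained and not relying on Hartman's theorem; the paper's is shorter precisely because it does.
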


\begin{proof}
Since $T$ is structurally stable, there exists $\eps > 0$ such that
$T + \varphi$ is topologically conjugate to $T$ whenever $\varphi \in C_b(X)$
is a Lipschitz map with $\|\varphi\|_\infty \leq \eps$ and
$\Lip(\varphi) \leq \eps$. Take $S \in GL(X)$ with $\|S - T\| \leq \eps/3$.
By choosing $\eps$ small enough, we have that $S$ is also hyperbolic.
We will find a homeomorphism $h : X \to X$ such that
\begin{equation}\label{SS1}
h \circ T = S \circ h.
\end{equation}
Let $U$ denote the open unit ball in $X$. By Lemma~\ref{Extension},
there is a Lipschitz map $\varphi \in C_b(X)$ such that $\varphi = S - T$
on $U$, $\|\varphi\|_\infty \leq \eps$ and $\Lip(\varphi) \leq \eps$.
Hence, by our choice of $\eps$, there is a homeomorphism $f : X \to X$
such that $f \circ T = (T + \varphi) \circ f$. Since $\varphi(0) = 0$,
we must have $f(0) = 0$. Consider the open set $V = f^{-1}(U)$. Then
\begin{equation}\label{SS2}
f \circ T|_V = S \circ f|_V \ \ \ \text{ and } \ \ \ T \circ
f^{-1}|_U = f^{-1} \circ S|_U.
\end{equation}
Let $X = X_s^T \oplus X_u^T$ and $X = X_s^S \oplus X_u^S$ be the
hyperbolic splittings of $T$ and $S$, respectively. There are
constants $c \geq 1$ and $0 < \beta < 1$ such that
\begin{equation}\label{SS3}
\|T^nx\| \leq c \beta^n \|x\| \ \text{ and } \ \|S^ny\| \leq c
\beta^n \|y\| \ \text{ whenever } x \in X_s^T \text{ and } y \in
X_s^S.
\end{equation}
Given $x \in X_s^T$ and $y \in X_s^S$, let $n_x$ and $m_y$ be the
smallest positive integers such that $T^nx \in V$ and $S^my \in U$
whenever $n \geq n_x$ and $m \geq m_y$. We define $h_s(x) =
S^{-n_x}(f(T^{n_x}x))$ and $g_s(y) = T^{-m_y}(f^{-1}(S^{m_y}y))$. A
simple induction argument using (\ref{SS2}) shows that
\begin{equation}\label{SS4}
h_s(x) = S^{-n}(f(T^nx)) \ \text{ and } \ g_s(y) =
T^{-m}(f^{-1}(S^my)) \ \text{ if } n \geq n_x \text{ and } m \geq
m_y.
\end{equation}
Since $S^k(h_s(x)) = S^{-n_x}(f(T^{n_x+k}x)) \to 0$ and $T^k(g_s(y))
= T^{-m_y}(f^{-1}(S^{m_y+k}y)) \to 0$ as $k \to \infty$, we have
that $h_s(x) \in X_s^S$ and $g_s(y) \in X_s^T$. Hence, we have maps
$h_s : X_s^T \to X_s^S$ and $g_s : X_s^S \to X_s^T$. If $x \in
X_s^T$, $y = h_s(x) \in X_s^S$ and $n \geq \max\{n_x,m_y\}$, then
$g_s(h_s(x)) = T^{-n}(f^{-1}(S^n(S^{-n}(f(T^nx))))) = x$.
Analogously, $h_s(g_s(y)) = y$ for all $y \in X_s^S$. Thus, the maps
$h_s$ and $g_s$ are inverses of each other. Let us now prove that
$h_s$ is continuous. Let $(x_k)_{k \in \N}$ be a sequence in $X_s^T$
converging to a point $x \in X_s^T$. By (\ref{SS3}),
$$
\|T^nx - T^nx_k\| \leq c \beta^{n_x} \|x - x_k\| \ \text{ whenever }
k \in \N \text{ and } n \geq n_x.
$$
Since the sequence $(T^nx)_{n \geq n_x}$ lies in the open set $V$
and does not accumulate on its boun\-dary (since it converges to
zero), we see that there exists $k_0 \in \N$ such that $T^nx_k \in
V$ for all $k \geq k_0$ and $n \geq n_x$. Hence, by (\ref{SS4}),
$h_s(x_k) = S^{-n_x}(f(T^{n_x}x_k))$ for all $k \geq k_0$, which
clearly implies that $h_s(x_k) \to h_s(x)$ as $k \to \infty$. An
analogous argument shows that $g_s$ is also continuous. Thus, $h_s :
X_s^T \to X_s^S$ is a homeomorphism. By (\ref{SS2}) and (\ref{SS4}),
$h_s(Tx) = S^{-n_x}(f(T^{n_x+1}x)) = S^{-n_x+1}(f(T^{n_x}x)) =
S(h_s(x))$ for all $x \in X_s^T$, that is,
$$
h_s \circ T|_{X_s^T} = S|_{X_s^S} \circ h_s.
$$
A similar argument produces a homeomorphism $h_u : X_u^T \to X_u^S$
such that
$$
h_u \circ T|_{X_u^T} = S|_{X_u^S} \circ h_u.
$$
Finally, define $h : X \to X$ by $h(x) = h_s(x_s)+h_u(x_u)$ if $x =
x_s+x_u$ with $x_s \in X_s^T$ and $x_u \in X_u^T$. Then $h$ is a
homeomophism satisfying (\ref{SS1}).
\end{proof}

%%%%%%%%%%%%%%%%%%%%%%%%%%%%%%%%%%%%%%%%%%%%%%%%%%%%%%%%%%%%%%%%%%%%%%%%%%%%%

\end{document}